\DeclareMathOperator{\ad}{ad}
\renewcommand{\le}{\leqslant}
\renewcommand{\ge}{\geqslant}
\newcommand\Lie[1]{\mathfrak{#1}}
\newcommand{\g}{\Lie{g}}
\newcommand{\h}{\Lie{h}}
\newcommand{\n}{\Lie{n}}
\newcommand{\s}{\Lie{s}}
\def\sl{\mathfrak{sl}}
\newcommand{\m}{{\rm m}_{\rm Cl}}
\newcommand{\ev}{{\rm ev}}
\newcommand{\HC}{{\rm hc}}  
\newcommand{\hc}{{\rm hc}_{\rm odd}}
\DeclareMathOperator{\Cl}{{\rm Cl}}
\newcommand{\F}{\mathcal{F}}
\newcommand{\J}{\mathcal{P}}
\def\c#1{\check{#1}}
\newcommand{\C}{\mathbb{C}}
\newcommand{\N}{\mathbb{N}}
\theoremstyle{plain}
\newtheorem{theorem}{Theorem}[section]
\newtheorem{proposition}[theorem]{Proposition}
\newtheorem{conjecture}[theorem]{Conjecture}
\theoremstyle{definition}
\newtheorem{remark}[theorem]{\em Remark}
\begin{document}

\title[On the Kostant conjecture for Clifford algebras]{On the Kostant conjecture for Clifford algebras}

\author{Anton Alekseev}
\address{Section de math\'ematiques, Universit\'e de Gen\`eve, 2-4 rue du Li\`evre,
c.p. 64, 1211 Gen\`eve 4, Switzerland}
\email{Anton.Alekseev@unige.ch}

\author{Anne Moreau}
\address{Laboratoire de Math\'ematiques et Applications, Universit\'e de Poitiers, 
T\'el\'eport 2, 11 Boulevard Marie et Pierre Curie, BP 30179, 
86962 Futuroscope Chasseneuil Cedex, France}
\email{Anne.Moreau@math.univ-poitiers.fr}


\begin{abstract}
Let $\g$ be a complex simple Lie algebra, and $\h \subset \g$ be a Cartan subalgebra.
In the end of 1990s, B. Kostant defined two filtrations on $\h$, one using the Clifford
algebras and the odd analogue of the Harish-Chandra projection $\hc: \Cl(\g) \to \Cl(\h)$,
and the other one using the canonical isomorphism $\check{\h} = \h^*$ (here 
$\check{\h}$ is the Cartan subalgebra in the simple Lie algebra $\check{\g}$
corresponding to the dual root system) and the adjoint action of the principal 
$\mathfrak{sl}_2$-triple $\check{\s} \subset \check{\g}$. Kostant conjectured that 
the two filtrations coincide.

The two filtrations arise in very different contexts, and comparing them proved to
be a difficult task. Y. Bazlov settled the conjecture for $\g$ of type $A_n$ using
explicit expressions for primitive invariants in the exterior algebra $\wedge \g$.
Up to now this approach did not lead to a proof for all simple Lie algebras.  

Recently, A. Joseph proved that the second Kostant filtration coincides with the filtration
on $\h$ induced by the generalized Harish-Chandra projection $(U\g \otimes \g)^\g \to S\h \otimes \h$
and the evaluation at $\rho \in \h^*$. 
In this note, we prove that Joseph's result is equivalent to the Kostant Conjecture.
We also show that the standard Harish-Chandra projection $U\g \to S\h$ composed
with evaluation at $\rho$ induces the same filtration on $\h$.

\end{abstract}

\subjclass{}

\maketitle

\section{Introduction}

Our starting point in this work is the fundamental paper by Kostant \cite{Kostant_Cliff}
on the structure of Clifford algebras over complex simple Lie algebras. 
Let $G$ be a complex simple Lie group of rank $r$ and $\g$ be the corresponding Lie algebra.
The cohomology ring of $G$ (over $\C$) is isomorphic to the ring of bi-invariant 
differential forms, $H(G) \cong (\wedge \g^*)^\g$.
By the Hopf-Koszul-Samelson theorem, $(\wedge \g^*)^\g$ is isomorphic to the 
exterior algebra $\wedge P$, where $P$ is a graded vector space with 
generators in degrees $2m_i+1$ defined by the exponents of $\g$, $m_1, \dots, m_r$.
For all $\g$ simple, $m_1=1$ and the corresponding bi-invariant differential
form is the Cartan 3-form
$$
\eta(x,y,z)=B_\g(x, [y,z]),
$$
where $x,y,z \in \g$ and $B_\g$ is the unique (up to multiple) invariant scalar
product on $\g$. 

The scalar product $B_\g$ defines a structure of a Clifford algebra
$\Cl(\g)=T\g/\langle x\otimes x - B_\g(x,x) \rangle$, where $T\g$ is the 
tensor algebra of $\g$. One of the main results of \cite{Kostant_Cliff} is the
theorem stating  that the adjoint invariant part of the Clifford algebra
$\Cl(\g)^\g \cong \Cl(P)$ is isomorphic to the Clifford algebra over $P$
(with the scalar product $B_P$ induced by $B_\g$). Under this isomorphism, the 
Cartan 3-form defines a canonical cubic element $\hat{\eta} \in \Cl(\g)$.
This element plays an important role in the theory of Kostant's cubic
Dirac operator (see \cite{Kostant_Dirac} and \cite{AM_Weil}), in the theory of group valued
moment maps \cite{AMW}, and recently in the Chern-Simons theory
in dimension one \cite{AM}.

Let $\g = \n_+ \oplus \h \oplus \n_-$ be a triangular decomposition
of $\g$, and let $\theta: \g \to \Cl(\g)$ be the canonical injection of
$\g$ as the generating set of $\Cl(\g)$. Consider the direct sum 
decomposition of the Clifford algebra
$$
\Cl(\g)=\Cl(\h) \oplus \big( \theta(\n_-) \Cl(\g) + \Cl(\g) \theta(\n_+)\big)
$$
and the {\em odd Harish-Chandra projection} $\hc: \Cl(\g) \to \Cl(\h)$.
The map $\hc$ is important in various applications ({\em e.g.} in the localization
formulas of \cite{AMW}).
In particular, the image of the canonical cubic element $\hat{\eta}$
is given by formula,
$$
\hc(\hat{\eta})=B_\g^\sharp(\rho),
$$
where $\rho \in \h^*$ is the half-sum of positive roots, and
$B_\g^\sharp: \h^* \to \h$ is the isomorphism induced by the scalar
product $B_\g$. In this context, the natural question is to evaluate the
images of higher degree generators of $P$ under the map $\hc$.
It is convenient to state this question in terms of the natural filtration of $P$
by degree (in contrast to grading, the filtration survives the passage to the 
Clifford algebra). A non-trivial result of Bazlov \cite{Bazlov}
and Kostant (private communications) is $\hc(P) =\theta(\h) \subset \Cl(\h)$.
Hence, $\hc(P^{(k)})$ defines a filtration on the Cartan
subalgebra $\h$.

Let $\check{\g}$ be the Lie algebra defined by the dual
root system, and $\check{\h} \subset \check{\g}$ the corresponding
Cartan subalgebra. Note that there is a canonical isomorphism 
$\h^* \cong \check{\h}$. The key observation of Kostant is that
$\rho \in \h^*$ viewed as an element of $\check{\h}$ coincides
with the Cartan element $\check{h}$ of the principal 
${\rm sl}_2$-triple $(\check{e}, \check{h}, \check{f})\subset \check{\g}$.
The action of this 
$\mathfrak{sl}_2$-triple induces a natural filtration
of $\check{\h}$,
$$
\F^{(m)}\check{\h}=\{ x  \in \check{\h}, \,\, \ad_{\check{e}}^{m+1} x=0\} .
$$
Since $[\check{e},[\check{e}, \check{h}]]=0$ , we have $\rho=\check{h}
\in \F^{(1)}\check{\h}$. Kostant suggested the following conjecture:

\vskip 0.2 cm

{\bf Kostant Conjecture.} The following two filtrations on $\h$ coincide:
$$
\hc(P^{(2m+1)}) = \theta\big( B_\g^\sharp(\F^{(m)}\check{\h}) \big) .
$$

\vskip 0.2cm

The two filtrations of the Kostant Conjecture arise in very different contexts,
and comparing them proved to be a difficult task. 
In \cite{Bazlov_thesis}, Bazlov
settled the Kostant conjecture for $\g$ of type $A_n$ using explicit expressions
for higher generators of $P$. 
The text \cite{Bazlov} claims to prove the conjecture
for all simple Lie algebras $\g$, but there is a gap in the argument (to be more
precise, in Lemma 4.4). 

The link between the two filtrations in the Kostant Conjecture can be established
in two steps. The decisive step has been recently made by Joseph in \cite{Joseph}.
He proved that filtration $B_\g^\sharp(\F^{(m)}\check{\h})$ arises in the
context of generalized Harish-Chandra projections recently introduced in \cite{KNV}.
In more detail, the generalized Harish-Chandra projection 
$\HC_\g: (U\g \otimes \g)^\g \to S\h \otimes \h$ composed with $\ev_\rho: S\h \to \C$, the evaluation at $\rho$, has the following property:

\vskip 0.2cm

{\bf Joseph's Theorem.} 

$$
(\ev_\rho \otimes 1) \circ \HC_\g (U^{(m)}\g \otimes \g)^\g = B_\g^\sharp(\F^{(m)}\check{\h}) .
$$

\vskip 0.2cm
Here $U^{(m)}\g$ is the natural filtration of the universal enveloping algebra 
by degree (where all elements of $\g \subset U\g$ are assigned degree one).

Our first result in this note is the equivalence of the Kostant Conjecture and Joseph's
theorem. In more detail, we prove that
$$
\theta\big( (\ev_\rho \otimes 1) \circ \HC_\g (U^{(m)}\g \otimes \g)^\g \big)
=\hc(P^{(2m+1)}).
$$
This results settles the Kostant Conjecture for all $\g$ simple.
Our proof is a combination of elementary Clifford calculus and of deep
results of \cite{Kostant_Cliff}.

Our second result is a variation of the same theme. For the standard Harish-Chandra
projection $\HC : U\g \to S\h$, we prove that
$$
\theta\big( (\ev_\rho \circ \HC \otimes 1) (U^{(m)}\g \otimes \g)^\g \big)= 
\hc(P^{(2m+1)}).
$$
While this paper was in preparation, A. Joseph informed us that he proved the following
theorem \cite{Joseph2}:
$$
(\ev_\rho \circ \HC \otimes 1) (U^{(m)}\g \otimes \g)^\g=
B_\g^\sharp(\F^{(m)}\check{\h})
$$
Together with our result, it gives an alternative proof of the Kostant Conjecture.

\vskip 0.2cm

{\bf Acknowledgments.} We are indebted to B. Kostant for introducing us to the
subject, and to E. Meinrenken for useful discussions and  encouragement. We 
would like to thank A. Joseph for sharing with us the results of his paper \cite{Joseph2}.
Our work was supported in part by the grants 200020-126817 and 
200020-126909  of the Swiss National Science
Foundation.

\section{Principal filtrations on $\h$}

Let $\g$ be a complex finite dimensional simple Lie algebra 
of rank $r$ and 
\begin{eqnarray*}   
	\g = \n_- \oplus \h \oplus \n_+ 
\end{eqnarray*}
a triangular decomposition of $\g$. 
Let $\Delta \subset \h^*$ be the root system of $(\g,\h)$, 
$\Pi$ the system of simple roots 
with respect to the Borel subalgebra $\h \oplus \n_+$ 
and $\Delta^+$ the corresponding set of positive roots. 
For any $\alpha \in \Delta$, we denote by $\c{\alpha} \in \h$ its coroot. 
Choose for each $\alpha \in \Delta$ a nonzero element $e_\alpha$ in 
the $\alpha$-root space so that 
$\{ \c{\beta}, e_{\alpha} \ ; \  \beta \in \Pi, \, \alpha \in \Delta \}$ 
is a Chevalley basis of $\g$. 

\medskip

Let $e=\sum_{\beta \in \Pi} e_\beta \in \n_+$ be the principal nilpotent element. 
By Jocobson-Morosov theorem, one can form an $\sl_2$-triple $(e,h,f)$ with
$h \in \h$ and $f \in \n_-$. 
The triple $(e,h,f)$ spans the principal $\sl_2$ Lie subalgebra 
$\s \subset \g$. 
Using the adjoint action of $\s$ on $\g$, one can define a filtration of $\h$,
\[
	\F^{(m)}\h = \{ x \in \h, \,\, \ad_e^{m+1} x=0 \} .
\]
The dimension of the vector space $\F^{(m)}\h$ jumps at the values $m=m_1, \dots, m_r$,
the exponents of $\g$. 
This follows from the Kostant's theorem \cite{Kostant_sl2}
which shows that $\g$ is a direct sum of $r$ irreducible $\ad_\s$-modules
of dimensions $2m_i+1$. 
In most cases, the exponents $m_1, \dots, m_r$
are all distinct. 
The exception is the case of the $D_n$ series, for even $n$ and $n \ge 4$, when 
there are two coincident exponents (equal to $n-1$).  

The filtration $\F^{(m)}\h$ is induced by a grading. 
In more detail, 
let $B_\g$ be a unique (up to multiple) invariant scalar product on $\g$.
It restricts to a scalar product on $\h$. 
The orthogonal complements
$\F^{(m)}\h^\perp$ form a new filtration. 
Define
$\F_m\h=\F^{(m)}\h \cap \F^{(m-1)}\h^\perp$. 
Note that
$\F_m\h$ are non-empty only for $m=m_i$ 
(and in most cases these are complex lines). 
Then, $\F^{(m)}\h = \oplus_{k\le m} \F_k\h$.

\medskip

Let $\c{\g}$ be 
the simple Lie algebra corresponding 
to the dual root system $\c{\Delta}$ 
of $\Delta$. 
The Cartan subalgebra $\c{\h}$ (with respect to $\c{\Delta}$) 
of $\c{\g}$ is canonically isomorphic to $\h^*$. 
As a consequence, we can identify $\c{\h}$ 
to $\h^*$.  
Let $\c{e} =\sum_{\beta \in \Pi} \c{e}_{\c{\beta}}$ be the principal nilpotent
element of $\c{\g}$
(here $\{ \beta, \, \c{e}_{\c{\alpha}} \ ; \  \beta \in \Pi, \, \alpha \in \Delta \}$ 
is a Chevalley basis of $\c{\g}$).
Since $\c{\h}^*$ and $\h$ are canonically  isomorphic,  
the co-adjoint action of $\c{e}$
induces another filtration on $\h$,
\[�
	\c{\F}^{(m)}\h = \{ x \in \h, \,\,  \big( \ad^*_{\c{e}} \big)^{m+1} x=0 \} .
\] 
The restriction of $B_{\c{\g}}$ to $\c{\h}$ is nondegenerate, 
hence it induces an isomorphism, $B_{\c{\g}}^\flat$, between $\c{\h}$ and $\c{\h}^*$ 
and a nondegenerate bilinear form on $\c{\h}^* \cong \h$.  
Observe that $\c{\F}^{(m)}\h = B_{\c{\g}}^\flat ( \c{\F}^{(m)} \c{\h} )$  
where $\c{\F}^{(m)} \c{\h} = \{ x \in \c{\h}, \, \ad_{\c{e}}^{m+1} x=0 \}$. 
%
Similarly to $\F^{(m)}\h$, the filtration $\c{\F}^{(m)}\h$ 
is induced by a grading, $\c{\F}^{(m)}\h=\oplus_{k\le m} \c{\F}_k\h$, 
with $\c{\F}_m\h=\c{\F}^{(m)}\h \cap \c{\F}^{(m-1)}\h^\perp$. 

\begin{remark}

The bilinear forms $B_{\c{\g}}|_{\h \times \h}$ 
and $B_{\g}|_{\h \times \h}$ do not coincide in general 
but they are proportional to each other. 
Hence, the orthogonal complements are defined relative to $B_{\c{\g}}|_{\h \times \h}$ 
or, equivalently, to $B_{\g}|_{\h \times \h}$.

\end{remark}

\section{Clifford algebra and odd Harish-Chandra projection}

Let $\Cl(\g)=\Cl(\g, B_\g)$ be the Clifford algebra defined by the scalar product $B_\g$.  
Recall that $\Cl(\g)$ is the quotient of the tensor algebra $T\g$ 
by the ideal generated by elements $x\otimes x - B_\g(x,x)$, $x \in \g$. 
Denote by $xy$ the product in $\Cl(\g)$, for $x,y \in \Cl(\g)$, 
and by $\theta : \g \to \Cl(\g)$ the canonical injection of $\g$ 
as a generating subspace of $\Cl(\g)$. 

\smallskip

As a vector space,
$\Cl(\g)$ is isomorphic to the exterior algebra $\wedge \g$, with an explicit
$\ad_\g$-equivariant isomorphism given by the anti-symmetrization
map 
\[� 
   q : \wedge \g \to \Cl(\g) .
\] 
Note that $q$ maps the
natural filtration on $\wedge \g$ (with all $x \in \g$ of degree one)
to the natural filtration on $\Cl(\g)$.
Set $\Cl^{0}(\g) = q \big( \sum_{k}  \wedge^{2k} \g \big)$ 
and $\Cl^{1}(\g) = q \big( \sum_{k} \wedge^{2k+1} \g \big)$. 
Then $\Cl^{i}(\g) \Cl^{j}(\g) \subset \Cl^{i+j}(\g)$ 
for $i,j \in \{0,1\}$, where we set $\Cl^2(\g) = \Cl^0(\g)$.  
Hence, $\Cl(\g)$ inherits the structure of a super 
associative algebra and so has the structure of a super Lie algebra, $[\,\, , \,]$,  
defined by: 
\[
	[u,v] = uv - (-1)^{ij} vu, \qquad u \in \Cl^{i}(\g), \, v \in \Cl^{j}(\g) .   �
\] 
Note that for $x,y \in \g$, we have 
$[\theta(x),\theta(y)] = \theta(x)\theta(y) + \theta(x)\theta(y) = 2 B_\g(x,y)$. 
To avoid confusion, we will denote by $[\,\, ,\, ]_\g$ 
the usual Lie bracket on $\g$. 


\smallskip

The scalar product $B_\g$ extends to a scalar product on $\wedge \g$, again 
denoted by $B_\g$, in the standard way. 
Let $P \subset (\wedge \g)^\g$ be the set of primitive element 
which is the $B_\g$-orthocomplement to 
$(\wedge \g)^\g_+ \wedge (\wedge \g)^\g_+$ in $(\wedge \g)^\g_+$ 
where $(\wedge \g)^\g_+$ is the augmentation ideal of $(\wedge \g)^\g$. 
Let $\alpha$ be the unique anti-automorphism of $\wedge \g$ 
which is equal to identity on $\g$,  
and let $B_{P}$ be the bilinear form 
defined on $P\times P$ by $B_{P} (u,v) = B_\g (\alpha u, v)$.  

The set $P$ is a graded vector space of dimension $r$ 
with generators in degrees $2m_i+1$, $i=1,\dots,r$, 
see \cite[\S 4.3]{Kostant_Cliff}. 
%
We will be using the natural filtration
\[�
	P^{(k)} = \{ v \in P, \,\,  \deg (v) \le k \}, \quad k \in \N. 
\]
Again, one can introduce the grading
with $P^{(k)}=\oplus_{l\le k} P_l$ and $P_k=P^{(k)} \cap (P^{(k-1)})^\perp$.
The graded components $P_k$ are nonvanishing for $k = 2 m_i+1$, $i=1,\ldots,r$.
Set $\J = q (P)$ and define the filtration
\[�
	\J^{(k)} = \{ v \in \J, \,\, {\rm deg}(v) \le k \}, \quad k \in \N. 
\] 
Obviously, $q(P^{(k)})=\J^{(k)}$. 

\medskip

One can easily show that there is the following decomposition
\[�
  \Cl(\g) = \Cl(\h) \oplus \big( \theta(\n_-) \Cl(\g) + \Cl(\g) \theta(\n_+) \big),
\]
where $\Cl(\h) \subset \Cl(\g)$ is the subalgebra spanned by $\theta(\h)$.
The projection $\hc : \Cl(\g) \to \Cl(\h)$ with respect to this decomposition is called
the odd Harish-Chandra projection.  
When restricted to $\ad_\g$-invariant elements, 
the map $\hc \circ q$ is an isomorphism, see \cite[Theorem 4.1]{Bazlov}. 
Bazlov (\cite[Proposition 4.5]{Bazlov}) and Kostant (private communications) 
proved the 
following non-trivial property of the map $\hc$: 

\begin{theorem}[Bazlov-Kostant]  \label{BK}

We have: $\hc \circ q(P) = \theta(\h).$

\end{theorem}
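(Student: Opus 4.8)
The plan is as follows. Since $\dim P=r=\dim\h$ and, by \cite[Theorem 4.1]{Bazlov}, $\hc\circ q$ is injective on $\ad_\g$-invariants, it suffices to prove the inclusion $\hc\circ q(P)\subseteq\theta(\h)$, after which equality follows for dimension reasons. Note also that $q(P)$ is odd, so $\hc\circ q(P)\subseteq\Cl^1(\h)$ is automatic; in particular for $r\le2$ one has $\Cl^1(\h)=\theta(\h)$ and there is nothing more to prove, so the content lies in the case $r\ge3$.

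First I would record how $\hc\circ q$ acts on monomials. Fixing a basis of $\g$ adapted to $\g=\n_-\oplus\h\oplus\n_+$ and pushing, inside $\Cl(\g)$, every factor $\theta(\n_+)$ to the right and every factor $\theta(\n_-)$ to the left, one sees that $\hc\circ q$ extracts from a given element of $\wedge^k\g$ the sum over all ways of pairing each $\n_+$-slot with the matching $\n_-$-slot, contracts those pairs — each contraction producing a scalar through $[\theta(e_\alpha),\theta(e_{-\alpha})]=2B_\g(e_\alpha,e_{-\alpha})$ — and keeps the remaining $\h$-slots. With $\Omega:=\sum_{\alpha\in\Delta^+}\iota_{e_{-\alpha}}\iota_{e_\alpha}$ the operator on $\wedge\g$ deleting one matched pair $e_{\pm\alpha}$ of opposite root vectors (root vectors turned into covectors via $B_\g$), this amounts to
\[
\hc\bigl(q(v)\bigr)=q\Bigl(\textstyle\sum_{a\ge0}c_a\,(\Omega^{a}v)\big|_{\wedge\h}\Bigr),\qquad v\in\wedge^{k}\g ,
\]
for explicit non-zero constants $c_a$, where $(\,\cdot\,)|_{\wedge\h}\colon\wedge^{k-2a}\g\to\wedge^{k-2a}\h$ is the $B_\g$-orthogonal projection and the $a$-th summand is the graded component of degree $k-2a$. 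The top summand ($a=0$) is the restriction $v|_{\wedge\h}$, which vanishes for every $\g$-invariant $v$ of positive degree: the restriction to $\wedge\h$ of an invariant is $W$-invariant, and $(\wedge^{>0}\h)^{W}=0$ by Solomon's theorem, since the free generators $\d f_i$ of $(S\h^{*}\otimes\wedge\h^{*})^{W}$ all carry positive polynomial degree. Hence for $v\in P$ of degree $2m+1$ one already gets $\hc\circ q(v)\in q\bigl(\bigoplus_{j\le 2m-1}\wedge^{j}\h\bigr)$.

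The real content is that the intermediate summands $(\Omega^{a}v)|_{\wedge\h}$, $1\le a\le m-1$, vanish as well, so that only the degree-one summand $c_m(\Omega^{m}v)|_{\wedge^{1}\h}\in\theta(\h)$ survives (it is non-zero: for $m=1$, $v=\hat\eta$, it is $B_\g^\sharp(\rho)$, reflecting $\sum_{\alpha\in\Delta^+}\alpha=2\rho$). I would arrange this as an induction on the exponents $m_1\le\cdots\le m_r$ using that $\hc\circ q$ is an algebra isomorphism on invariants: granting $\hc\circ q(P_{2m_i+1})\subseteq\theta(\h)$ for all $m_i<m$, the subalgebra of $\Cl(\g)^\g$ generated by those primitives maps isomorphically onto a Clifford subalgebra $\Cl(V)$ with $V\subsetneq\h$; then I would use the primitivity of a degree-$(2m+1)$ generator $v$ — orthogonal to every decomposable invariant, in particular to $\wedge P_{<2m+1}$ — together with the interaction of $\hc\circ q$ with the anti-automorphism $\alpha$ and with $B_\g$, and a parity count, to push $\hc\circ q(v)$ into $\theta(\h)$. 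A more computational alternative is to work inside Kostant's $\rho$-decomposition $\Cl(\g)\cong\mathrm{End}(V_\rho)\otimes\Cl(P)$ of \cite{Kostant_Cliff}: combined with the complementary identification $\Cl(\g)\cong\Cl(\h)\otimes\mathrm{End}(\wedge\n_+)$, under which $\hc$ becomes the ``vacuum'' matrix coefficient on the second tensor factor, this makes $q(P)$, hence $\hc\circ q(P)$, explicit enough to read the claim off directly.

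The step I expect to be the main obstacle is exactly this vanishing of the intermediate summands: the soft inputs available — the dimension count, the parity constraint, the algebra-isomorphism property, and $(\wedge^{>0}\h)^{W}=0$ — control only the leading summand $a=0$, so what is left is a genuine computation. This is where Bazlov's argument \cite[Proposition 4.5]{Bazlov} does its work, and it is why the statement deserves to be called non-trivial; once it is in place, the theorem follows as in the first paragraph.
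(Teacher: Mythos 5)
The paper does not prove this theorem: it is imported verbatim from Bazlov \cite[Proposition 4.5]{Bazlov} and from Kostant's private communications, and is used as a black box in the Remark following Conjecture \ref{Kostant_conj} and in the proof of Theorem \ref{thm2}. So there is no in-paper argument to measure you against, and your proposal has to stand on its own. It does not: it is a reduction plus an acknowledged hole. The soft part is fine --- injectivity of $\hc\circ q$ on $\ad_\g$-invariants together with $\dim P=\dim\h=r$ does reduce the equality to the inclusion $\hc\circ q(P)\subseteq\theta(\h)$; parity does place the image in $\Cl^1(\h)$; and the leading term of the Wick-type expansion does vanish because the restriction of an invariant to $\wedge\h$ is $W$-invariant and $(\wedge^{>0}\h)^W=0$. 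But the vanishing of the intermediate contraction terms $(\Omega^a v)|_{\wedge\h}$ for $1\le a\le m-1$ is the entire content of the theorem, and you say so yourself in your last paragraph while deferring it to ``Bazlov's argument.'' Naming the hard step and pointing at the reference where it is carried out is not a proof of it; as written, nothing you have established rules out, say, a degree-three component surviving in $\hc\circ q(v)$ for a primitive generator $v$ of degree $2m+1$ with $m\ge 2$.

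Two smaller points. First, your expansion $\hc(q(v))=q\bigl(\sum_a c_a(\Omega^a v)|_{\wedge\h}\bigr)$ with a single operator $\Omega$ and constants $c_a$ depending only on $a$ is plausible but not free: the pairings in the normal-ordering computation carry position-dependent signs, and you need to check that they assemble into powers of $\Omega$ with universal coefficients (this is essentially the content of Bazlov's Lemmas 4.2--4.3, which you are implicitly assuming). Second, of the two strategies you sketch for the main step, the inductive one is the more delicate --- primitivity of $v$ (orthogonality to decomposables) is a statement in $\wedge\g$, and transporting it through $\hc\circ q$ to constrain the image in $\Cl(\h)$ requires exactly the compatibility of $\hc\circ q$ with $B_P$ and $B_\g$ that the paper records in its Remark, but that compatibility alone does not force the intermediate graded components to vanish. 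Until one of the two outlined strategies is actually executed, the proposal is a plan, not a proof.
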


\medskip 

The following conjecture is due to Kostant (see also \cite[\S 5.6]{Bazlov}): 

\begin{conjecture}[Kostant]    \label{Kostant_conj}

For any $k \in \N$, we have: $\hc(\J^{(2k+1)}) = \theta\big( \c{\F}^{(k)} \h \big).$

\end{conjecture}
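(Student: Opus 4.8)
The plan is to deduce Conjecture~\ref{Kostant_conj} from Joseph's Theorem by linking the Clifford picture to the enveloping-algebra picture through a single explicit $\ad_\g$-equivariant map, and then to win by a dimension count. First I would recast the statement. Since $B_\g$ and $B_{\c{\g}}$ restrict to proportional forms on $\h$ (the Remark in \S2), the isomorphism $B_\g^\sharp\colon\h^*\to\h$ carries the filtration $\c{\F}^{(m)}\c{\h}\subset\c{\h}\cong\h^*$ onto the filtration $\c{\F}^{(m)}\h$ of \S2; thus Joseph's Theorem reads
\[
  (\ev_\rho\otimes 1)\circ\HC_\g\,(U^{(m)}\g\otimes\g)^\g=B_\g^\sharp\bigl(\c{\F}^{(m)}\c{\h}\bigr)=\c{\F}^{(m)}\h ,
\]
and Conjecture~\ref{Kostant_conj} (with $k=m$) becomes the \emph{bridge identity}
\[
  \hc\bigl(\J^{(2m+1)}\bigr)=\theta\Bigl((\ev_\rho\otimes 1)\circ\HC_\g\,(U^{(m)}\g\otimes\g)^\g\Bigr),\qquad m\in\N ,
\]
which is what I would establish.

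Next I would introduce the comparison map. Let $\gamma\colon\g\to\Cl^{0}(\g)$ be the quadratic map $\gamma(x)=\tfrac14\sum_a\theta([x,e_a]_\g)\,\theta(e^a)$, $(e_a)$ a basis of $\g$ and $(e^a)$ its $B_\g$-dual basis. It satisfies $[\gamma(x),\theta(y)]=\theta([x,y]_\g)$ and is a homomorphism of Lie algebras, hence extends to an algebra homomorphism $\overline{\gamma}\colon U\g\to\Cl^{0}(\g)$. Set
\[
  \Phi\colon U\g\otimes\g\longrightarrow\Cl(\g),\qquad\Phi(u\otimes x)=\overline{\gamma}(u)\,\theta(x).
\]
Then $\Phi$ is $\ad_\g$-equivariant; because $\gamma$ raises Clifford degree by $2$ and $\theta$ by $1$, it maps $U^{(m)}\g\otimes\g$ into the degree $\le 2m+1$ part of $\Cl(\g)$, and on invariants into $\Cl(\g)^\g$. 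A short computation identifies $\Phi\bigl(\sum_a e_a\otimes e^a\bigr)$, the image of the Casimir, with a nonzero multiple of the cubic element $\hat\eta$.

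The crux --- and the step I expect to be hardest --- is the \emph{key lemma}
\[
  \hc\circ\Phi=\theta\circ(\ev_\rho\otimes 1)\circ\HC_\g\qquad\text{on }(U\g\otimes\g)^\g .
\]
This should be accessible by elementary but delicate Clifford calculus. Using the decomposition $\Cl(\g)=\Cl(\h)\oplus\bigl(\theta(\n_-)\Cl(\g)+\Cl(\g)\theta(\n_+)\bigr)$ one computes $\hc(\gamma(h))=\langle\rho,h\rangle$ for $h\in\h$ --- the source of the $\rho$-shift, specializing for $\hat\eta$ to the known formula $\hc(\hat\eta)=B_\g^\sharp(\rho)$ --- and $\hc(\gamma(x))=0$ for $x$ in any root space; one then propagates this through the algebra homomorphism $\overline{\gamma}$ and the Clifford relations, matching the resulting bookkeeping against the defining (PBW-type) decomposition underlying the generalized Harish--Chandra projection $\HC_\g$ of~\cite{KNV}, and finally carries the remaining tensor factor $\g$ through $\theta$. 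Keeping the \emph{generalized} projection $\HC_\g$ (rather than a naive ``take the $\h$-component'' map), tracking the normalization constants, and checking that all components of $\hc(\Phi(w))$ of Clifford degree $>1$ cancel, is where I expect the real work to lie.

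Granting the key lemma, the conjecture follows. Its right-hand side has image in $\theta(\h)$; since $\theta(\h)=\hc\circ q(P)$ by Theorem~\ref{BK} and $\hc$ is injective on $\Cl(\g)^\g$ by \cite[Theorem 4.1]{Bazlov}, $\Phi$ maps $(U\g\otimes\g)^\g$ into $\J=q(P)$, and by the degree bound above it maps $(U^{(m)}\g\otimes\g)^\g$ into $\J^{(2m+1)}$. Hence
\[
  \theta\Bigl((\ev_\rho\otimes 1)\circ\HC_\g\,(U^{(m)}\g\otimes\g)^\g\Bigr)=\hc\bigl(\Phi\,(U^{(m)}\g\otimes\g)^\g\bigr)\subseteq\hc\bigl(\J^{(2m+1)}\bigr),
\]
so $\theta(\c{\F}^{(m)}\h)\subseteq\hc(\J^{(2m+1)})$ by the reduction in the first paragraph. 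Finally the two sides have the same dimension $\#\{i:m_i\le m\}$ (counted with multiplicity): the dimension of $\c{\F}^{(m)}\c{\h}$ jumps exactly at the exponents, which coincide for $\g$ and $\c{\g}$, whereas $P^{(2m+1)}$ is spanned by the generators of $P$ of degree $\le 2m+1$ and $\hc$ is injective on $q(P)$. An inclusion of subspaces of equal dimension is an equality; this is the bridge identity, hence Conjecture~\ref{Kostant_conj}. Reading the same chain backwards shows the bridge identity also implies Joseph's Theorem, so the two are equivalent; a parallel argument with the ordinary projection $\HC\colon U\g\to S\h$ in place of $\HC_\g$ yields the analogous statement for $\ev_\rho\circ\HC$, which together with~\cite{Joseph2} gives a second route to the conjecture.
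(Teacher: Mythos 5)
Your proposal is correct in outline and its core coincides with the paper's: your map $\Phi(u\otimes x)=\overline{\gamma}(u)\theta(x)$ is exactly the paper's $\mu=\m\circ(\tau\otimes 1)$, and your ``key lemma'' $\hc\circ\Phi=\theta\circ(\ev_\rho\otimes 1)\circ\HC_\g$ on $(U\g\otimes\g)^\g$ is precisely Proposition \ref{prop3} ($\mu_1=\mu_2$), which the paper proves by the two computations you sketch: $\mu\bigl(\rho_L(\n_-)(U\g\otimes\g)\bigr)$ and $\mu\bigl(\rho_R(\n_+)(U\g\otimes\g)\bigr)$ lie in $\ker\hc$ (Proposition \ref{prop1}, the one slightly delicate point being the identity $\mu(\rho_R(x)(a\otimes b))=-\tau(a)\theta(b)\tau(x)$), together with $\hc\circ\tau=\ev_\rho$ on $S\h$ (Proposition \ref{prop2}). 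You leave this lemma as a sketch, but it is the right sketch and the computations are elementary, so I would not count it as a gap.

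Where you genuinely diverge is the last step. The paper proves the full equality $\mu\bigl((U^{(m)}\g\otimes\g)^\g\bigr)=\J^{(2m+1)}$ (Proposition \ref{prop4}) by invoking Kostant's structural results (Theorem D, Corollary 80 and Theorem 89 of \cite{Kostant_Cliff}), which exhibit every element of $\J^{(2m_i+1)}$ in the form $\sum_k\tau(a_k)\theta(b_k)$; this yields $\hc(\J^{(2m+1)})=\mu_1\bigl((U^{(m)}\g\otimes\g)^\g\bigr)$ unconditionally and hence the stated \emph{equivalence} with Joseph's theorem. You instead prove only the inclusion $\Phi\bigl((U^{(m)}\g\otimes\g)^\g\bigr)\subseteq\J^{(2m+1)}$ --- by a neat soft argument combining Theorem \ref{BK}, the injectivity of $\hc$ on $\Cl(\g)^\g$ and the degree bound --- and close the gap with a dimension count, $\dim\c{\F}^{(m)}\h=\dim\hc(\J^{(2m+1)})=\#\{i:\,m_i\le m\}$, which is valid (with multiplicity in the $D_{2n}$ case) since the exponents of $\g$ and $\c{\g}$ coincide and $\hc$ is injective on $\J$. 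This buys you a proof of the Conjecture that avoids Corollary 80 and Theorem 89 entirely, at the price of using Joseph's theorem as an input for the surjectivity. One consequence: your closing remark that ``reading the chain backwards'' gives the implication from the Kostant conjecture back to Joseph's theorem does not follow from what you have established, because that direction requires the surjectivity of $\mu$ onto $\J^{(2m+1)}$ independently of Joseph's theorem --- exactly the content of Proposition \ref{prop4}. For the Conjecture itself this is immaterial.
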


\begin{remark}

In fact, the original formulation of Kostant's conjecture is more precise. 
It states that $\hc \big(q(P_{2k+1}) \big)= \theta \big(\c{\F}_k\h\big)$.
However, it is sufficient to prove the statement about filtrations 
as we explain below.

\smallskip

The map $\hc\circ q: P \to \theta(\h)$ is an isomorphism of vector spaces, and 
the scalar product $B_{P}$ is mapped to $B_\g$ in the following sense:  
Let $v_1,v_2 \in P$ and $x_1,x_2 \in \h$ 
such that $\hc(q(v_1)) =\theta(x_1)$ and $\hc(q(v_2)) =\theta(x_2)$ (see Theorem \ref{BK}).  
Theorem~B in \cite{Kostant_Cliff} states that
$\Cl(\g)^\g \cong \Cl(P, B_{P})$ with isomorphism 
induced by the injection $q: P \to \Cl(\g)^\g$. 
As a consequence, 
we have $ [q(v_1), q(v_1)]  = 2 B_{P} (v_1 , v_2)$. 
Since in addition the restriction of $\hc \circ q$ to $\ad_\g$-invariant elements is an isomorphism, 
we get
\begin{eqnarray*}
	 2 B( x_1, x_2) & = & [ \theta(x_1) , \theta(x_2)] =
 	[ \hc(q(v_1)) , \hc(q(v_2))] \\
	& = & \hc \big( [q(v_1), q(v_2)] \big) \,\, = \,\, 2 B_{P} (v_1,v_2).
\end{eqnarray*}

Hence, 
$\hc(q(P^{(2k+1)})) = \theta(\c{\F}^{(k)}\h)$ implies 
$\hc(q(P^{(2k+1)})^\perp) = \theta(\c{\F}^{(k)}\h^\perp)$.
Note that $P^{(2k)}=P^{(2k-1)}$. 
Indeed, remember that the graded components $P_k$ are nonvanishing 
for $k = 2 m_i +1$, $i=1,\ldots,r$. 
So, 
\begin{eqnarray*}
	\hc(q( P_{2k+1} )) & = & \hc(q( P^{(2k+1)} \cap (P^{(2k)})^\perp )) \\
	&  = &  \hc(q( P^{(2k+1)} \cap (P^{(2k-1)})^\perp ))  
	     =  \theta( \c{\F}^{(k)}\h \cap \c{\F}^{(k-1)}\h^\perp ) \,\, = \,\, \theta(\c{\F}_k\h).
\end{eqnarray*}

\end{remark}

\section{Joseph's theorem implies the Kostant conjecture} 

Recall that $U\g$ admits a decomposition, 
\[
	U\g = S\h \oplus \big( \n_- \, U\g + U\g \, \n_+).
\]
It induces the Harish-Chandra projection $\HC: U\g \to S\h$.
By the Harish-Chandra theorem, the center of the universal
enveloping algebra is isomorphic (under the map $\HC$) to the 
ring $(S\h)^{\cdot W}$ where the Weyl group $W$ acts on
$\h^*$ by the shifted action, 
\[
\lambda \mapsto w\cdot \lambda = (\lambda+\rho)^w - \rho, \qquad w \in W,
\]
where $\lambda \mapsto \lambda^w$ stands for the usual $W$-action.
Recently, generalized Harish-Chandra projections were introduced
in \cite{KNV}. 
Let $V$ be a finite-dimensional 
$\g$-module, and
denote by $\sigma$ the algebra homomorphism $\sigma: U\g \to {\rm End}(V)$. 
The space $U\g \otimes V$ carries two commuting $\g$-actions,
\[�
	\rho_L(x): a \otimes b \to xa \otimes b \,  , \qquad 
	\rho_R(x): a\otimes b \to - ax \otimes b + a \otimes \sigma(x)b.
\]
Using these two actions, one defines a direct sum decomposition,
\[�
	U\g \otimes V = S\h \otimes V \oplus \big( \rho_L(\n_-) (U\g \otimes V)
	+ \rho_R(\n_+) (U\g \otimes V) \big), 
\]
see the equality (4) in \cite{KNV} which refers to \cite[Proposition 3.3]{KO}. 
This decomposition defines the generalized Harish-Chandra projection, 
\[
	\HC_V :  U\g \otimes V \to S\h \otimes V.
\]
For $V$ the trivial module, $\HC_V$ coincides with the standard 
Harish-Chandra projection $\HC$. 

Under the generalized Harish-Chandra projection, the invariant subspace
$(U\g \otimes V)^\g$ for the diagonal action $\rho(x)=\rho_L(x) + \rho_R(x)$
injects into $S\h \otimes V[0]$ (here $V[0] \subset V$ is the zero weight
subspace relative to the action of $\h$); 
see for instance \cite[\S 3]{KNV}. 
In particular, for $V=\g$ equipped with the adjoint action, one
gets an injection
\[�
	\HC_\g: (U\g \otimes \g)^\g \hookrightarrow S\h \otimes \h.
\]

For every element $\lambda \in \h^*$, one can introduce an evaluation
map, $\ev_\lambda: S\h \to \C$, associating to a polynomial $p\in S\h \cong \C[\h^*]$
its value at $\lambda$, $\ev_\lambda(p) = p(\lambda)$. 
In particular, we will be interested in the 
evaluation at $\rho$, the half-sum of positive roots. 
Recently, Joseph \cite{Joseph} proved the following theorem:

\begin{theorem}[Joseph]    \label{Joseph}

For any $m \in \N$, we have:  
\[�
	(\ev_\rho \otimes 1) \circ \HC_\g \big( (U^{(m)}\g \otimes \g)^\g \big) 
	= \c{\F}^{(2m+1)}\h.
\]

\end{theorem}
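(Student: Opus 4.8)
The plan is to obtain the theorem from two ingredients: a description of the left-hand side in terms of invariant theory that bounds its filtered dimension, and a compatibility between the degree filtration on Harish-Chandra images and the operator $\ad^*_{\c e}$ which defines the right-hand side.

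First I would record the bookkeeping. Both $\c\F^{(\bullet)}\h$ and $m\mapsto(\ev_\rho\otimes1)\circ\HC_\g((U^{(m)}\g\otimes\g)^\g)$ are increasing filtrations of $\h$ that exhaust $\h$. On the right, the Kostant decomposition $\c\g=\bigoplus_{i=1}^{r}V(2m_i)$ as an $\ad_{\c\s}$-module identifies $\c\h$ with the zero weight space of $\ad_{\c h}$, and the zero weight vector of $V(2m_i)$ is killed by $\ad_{\c e}^{m+1}$ precisely when $m\ge m_i$; hence $\dim\c\F^{(m)}\h=\#\{i:m_i\le m\}$, with jumps at the exponents. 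On the left, $\mathrm{gr}\,(U\g\otimes\g)^\g=(S\g\otimes\g)^\g$, which by Kostant's separation of variables is a free $(S\g)^\g$-module of rank $r$ whose homogeneous generators lie in degrees $m_1,\dots,m_r$ (the generalized exponents of the adjoint module); choose lifts $\tilde\xi_1,\dots,\tilde\xi_r$ with $\tilde\xi_i\in(U^{(m_i)}\g\otimes\g)^\g$. Because $\HC_\g$ is compatible with central characters --- so that, after composing with $\ev_\rho$, central multiples of the $\tilde\xi_i$ collapse to scalar multiples --- the image $(\ev_\rho\otimes1)\circ\HC_\g((U^{(m)}\g\otimes\g)^\g)$ is spanned by the vectors $w_i:=(\ev_\rho\otimes1)\HC_\g(\tilde\xi_i)$ with $m_i\le m$, and so has dimension at most $\#\{i:m_i\le m\}$. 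Thus the theorem reduces to two assertions: $(\ad^*_{\c e})^{m+1}$ annihilates $(\ev_\rho\otimes1)\HC_\g\big((U^{(m)}\g\otimes\g)^\g\big)$ for every $m$ (an inclusion), and $(\ad^*_{\c e})^{m_i}w_i\neq0$ for every $i$ (making the count sharp, the two coincident $D_n$-exponents contributing an independent pair).

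The base case is a direct computation: for the degree-one generator $\Omega=\sum_i e_i\otimes e^i\in(\g\otimes\g)^\g$ (dual bases for $B_\g$) one evaluates $\HC_\g$ in the triangular decomposition, where the $\rho_R(\n_+)$-corrections produce a shift, and finds $(\ev_\rho\otimes1)\HC_\g(\Omega)=c\,B_\g^\sharp(\rho)$ with $c\neq0$; since $\rho=\c h$ spans $\c\F^{(1)}\c\h$, this settles $m=1$. For general $m$ the mechanism --- and this is Joseph's decisive point --- is to realize the principal nilpotent $\c e=\sum_{\beta\in\Pi}\c e_{\c\beta}$ on the Harish-Chandra side as a degree-raising operator. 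Concretely, I would look for a linear operator $D$ on $S\h\otimes\h$, homogeneous of degree $+1$ and built as a sum over the simple roots so as to match the expression for $\c e$, satisfying $\ad^*_{\c e}\circ(\ev_\rho\otimes1)=(\ev_\rho\otimes1)\circ D$ on the image of $\HC_\g$. An equivalent route realizes $(U\g\otimes\g)^\g$ as intertwiners ${\rm Hom}_\g(M(\lambda),M(\lambda)\otimes\g)$ of Verma modules, the $\lambda$-directions running through $\c\h$; specializing at $\lambda=\rho$ and tracing how the structure of that specialization is assembled from the elementary translations across the simple walls should exhibit $\c e$.

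Granting such an identity, an element $\xi\in(U^{(m)}\g\otimes\g)^\g$ has $\HC_\g(\xi)$ of $S\h$-degree $\le m$, so $D^{m+1}\HC_\g(\xi)=0$ and hence $(\ad^*_{\c e})^{m+1}\big((\ev_\rho\otimes1)\HC_\g(\xi)\big)=0$; together with the dimension bound and the non-vanishing $(\ad^*_{\c e})^{m_i}w_i\neq0$ --- which I would obtain from the injectivity of $\HC_\g$ on invariants and a leading-symbol comparison, or by running the identity backwards against Kostant's value of the generalized exponents --- this yields $(\ev_\rho\otimes1)\circ\HC_\g((U^{(m)}\g\otimes\g)^\g)=\c\F^{(m)}\h$ for all $m$. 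The hard part will be the identity above: that the Harish-Chandra combinatorics of $(U\g\otimes\g)^\g$, once evaluated at $\rho$, is controlled by the principal nilpotent of the Langlands dual $\c\g$ is exactly the unexpected bridge discovered by Joseph; the remaining inputs --- the $\ad_{\c\s}$-decomposition, separation of variables, and the computation of $\HC_\g(\Omega)$ --- are classical or formal.
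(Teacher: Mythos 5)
The first thing to note is that the paper does not prove this statement at all: Theorem \ref{Joseph} is imported from Joseph's paper \cite{Joseph}, where it is established via Zhelobenko and Bernstein--Gelfand--Gelfand operators, and the present paper only uses it as a black box in deducing the Kostant conjecture (Theorem \ref{Kostant}). Incidentally, your target $\c{\F}^{(m)}\h$ matches the version stated in the introduction and actually used in the proof of Theorem \ref{Kostant}; the index $2m+1$ in the displayed statement of Theorem \ref{Joseph} appears to be a misprint. So there is no in-paper argument to compare against, and your proposal must be judged as a proof of Joseph's theorem on its own terms.

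On those terms it is a reasonable road map with a genuine gap at its center. The reductions are sound and standard: the count $\dim\c{\F}^{(m)}\h=\#\{i:\ m_i\le m\}$ from the principal $\mathfrak{sl}_2$-decomposition of $\c{\g}$; the freeness of $(U\g\otimes\g)^\g$ over $Z(U\g)$ with generators in filtration degrees $m_1,\dots,m_r$, which together with compatibility of $\HC_\g$ with the center bounds the dimension of the left-hand side by the same count; and the direct computation for $\Omega$ settling $m=1$. But the entire content of the theorem is then concentrated in the two assertions you flag and do not establish: the existence of a degree-raising operator $D$ on $S\h\otimes\h$ with $\ad^*_{\c{e}}\circ(\ev_\rho\otimes 1)=(\ev_\rho\otimes 1)\circ D$ on the image of $\HC_\g$, and the non-vanishing $(\ad^*_{\c{e}})^{m_i}w_i\neq 0$. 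Neither is formal. The first is precisely where the Langlands-dual principal nilpotent enters, and ``I would look for a linear operator $D$'' is not a construction; in Joseph's treatment this step requires the Zhelobenko-operator machinery, and the non-degeneracy input rests on Rohr's symmetric-algebra analogue cited in the final remark of this paper. Without $D$ the inclusion of the left-hand side into $\c{\F}^{(m)}\h$ is unproved, and without the non-vanishing your dimension estimate only gives an upper bound on the left-hand side, so equality does not follow from the inclusion alone (you are aware of this, but the needed lower bound is exactly the unproven claim). As written, the proposal correctly identifies the architecture of Joseph's argument but does not supply its decisive step.
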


Here $U^{(m)}\g$ is the natural filtration of $U\g$ induced by assigning
degree one to all elements of $\g \subset U\g$. 
The proof of Joseph's theorem involves the technique of 
Zhelobenko and Bernshtein-Gelfand-Gelfand operators. 

\medskip

Our first result in this note is the following theorem:

\begin{theorem} \label{Kostant}  

Joseph's Theorem is equivalent to the Kostant conjecture. 

\end{theorem}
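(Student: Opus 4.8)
The plan is to prove, unconditionally, the identity of filtrations on $\h$
\[
 \theta\big((\ev_\rho\otimes 1)\circ\HC_\g\big((U^{(m)}\g\otimes\g)^\g\big)\big)=\hc\big(\J^{(2m+1)}\big),\qquad m\in\N ;
\]
granting this, Theorem~\ref{Kostant} is immediate, since inserting Joseph's Theorem~\ref{Joseph} (which identifies $(\ev_\rho\otimes 1)\circ\HC_\g\big((U^{(m)}\g\otimes\g)^\g\big)$ with $\c\F^{(m)}\h$) turns the identity into the Kostant Conjecture~\ref{Kostant_conj}, and inserting the Kostant Conjecture returns Joseph's Theorem. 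So the whole task is the displayed identity, and I would approach it in the following steps.

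First, reduce both sides to a statement about $r$ generators. On the $U\g$-side, $(U\g\otimes\g)^\g$ is a free module of rank $r$ over the centre $Z(U\g)$, with homogeneous generators $\xi_1,\dots,\xi_r$ whose $U\g$-degrees are the exponents $m_1,\dots,m_r$; indeed on associated graded objects $(S\g\otimes\g)^\g$ is freely generated over $(S\g)^\g$ by the differentials $\d I_i$ of the basic invariants, of degrees $m_i$. As $\HC_\g$ is $Z(U\g)$-linear and $\ev_\rho\circ\HC$ annihilates the augmentation ideal of $Z(U\g)$ (being the central character of the trivial module), the image $(\ev_\rho\otimes1)\circ\HC_\g\big((U^{(m)}\g\otimes\g)^\g\big)$ is spanned by the vectors $(\ev_\rho\otimes1)\HC_\g(\xi_i)$ with $m_i\le m$. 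On the Clifford side, $P$ is graded with generators in degrees $2m_i+1$, so $\J^{(2m+1)}=q(P^{(2m+1)})$ is spanned by the $q(v_i)$ with $m_i\le m$, and likewise $\hc(\J^{(2m+1)})$ is spanned by the $\hc(q(v_i))$. Thus it is enough to match, for each exponent, the vector $\theta\big((\ev_\rho\otimes1)\HC_\g(\xi_i)\big)$ with $\hc\big(q(v_i)\big)$ up to lower filtration degree (with a little extra care at the two coincident exponents of type $D_n$, $n$ even).

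Second, build a bridge. Let $\gamma\colon\g\to\Cl^0(\g)$ be the standard quadratic map, normalised so that $\ad_x=[\gamma(x),\,\cdot\,]$ on $\Cl(\g)$; this is a Lie algebra morphism, so it extends to an algebra homomorphism $\gamma\colon U\g\to\Cl(\g)$, and I set $\Psi\colon U\g\otimes\g\to\Cl(\g)$, $\Psi(u\otimes x)=\gamma(u)\,\theta(x)$. Since $\gamma$ intertwines the adjoint actions and $\theta$ is $\ad_\g$-equivariant, $\Psi$ intertwines the diagonal action $\rho_L+\rho_R$ with $\ad_\g$, hence restricts to $\Psi\colon(U\g\otimes\g)^\g\to\Cl(\g)^\g$; and since $\gamma$ doubles the natural filtration degree while $\theta$ raises it by one, $\Psi$ sends $(U^{(m)}\g\otimes\g)^\g$ into the part of $\Cl(\g)^\g$ of filtration degree $\le 2m+1$ --- this is the source of the shift from $m$ to $2m+1$ in the conjecture. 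By Theorem~B of \cite{Kostant_Cliff} one has $\Cl(\g)^\g=q(\wedge P)$; let $\pi_\J\colon\Cl(\g)^\g\to q(P)=\J$ be the projection onto the primitive part. On associated graded objects $\pi_\J\circ\Psi$ induces (up to a nonzero scalar) the classical Chern--Weil transgression $\d I_i\mapsto v_i$, which is surjective onto $P$; consequently $\pi_\J\circ\Psi$ maps $(U^{(m)}\g\otimes\g)^\g$ \emph{onto} $\J^{(2m+1)}$.

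It then remains to establish the comparison of the two Harish--Chandra projections,
\[
 \hc\circ\pi_\J\circ\Psi=\theta\circ(\ev_\rho\otimes 1)\circ\HC_\g\qquad\text{on }(U\g\otimes\g)^\g ,
\]
for together with the surjectivity just noted and Theorem~\ref{BK} (which gives $\hc(\J)=\theta(\h)$) it yields $\hc(\J^{(2m+1)})=\hc\big(\pi_\J\Psi((U^{(m)}\g\otimes\g)^\g)\big)=\theta\big((\ev_\rho\otimes1)\HC_\g((U^{(m)}\g\otimes\g)^\g)\big)$, which is the desired identity. Two checks are immediate: $\hc(\gamma(x))=\rho(x)$ for $x\in\h$ (a short Clifford reordering, using $\hc\big(\theta(e_\alpha)\theta(e_{-\alpha})\big)=2B_\g(e_\alpha,e_{-\alpha})$ for $\alpha>0$), and $\hc(\hat\eta)=\theta\big(B_\g^\sharp(\rho)\big)$, which handles the degree-one generator. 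I expect the general case of this comparison to be the main obstacle: $\hc$ is not multiplicative, so $\hc\circ\Psi$ cannot be computed factor by factor, and the ``decomposable'' corrections carried by $\Psi(\xi_i)$ inside $\Cl(\g)^\g\cong\Cl(P,B_P)$ are not killed by $\hc$ directly --- one must show they disappear once $\pi_\J$ is applied, in a way compatible with evaluation at $\rho$. This is exactly where the deep results of \cite{Kostant_Cliff} enter: the isomorphism $\Cl(\g)^\g\cong\Cl(P,B_P)$ and the fact, recorded in the Remark following Conjecture~\ref{Kostant_conj}, that under $\hc\circ q$ the form $B_P$ on $P$ goes to $B_\g$ on $\theta(\h)$. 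Using this one replaces the brute-force Clifford calculus by a structural comparison of the two projections along the filtration by $Z(U\g)$ and by exponent, finally matching $\theta\big((\ev_\rho\otimes1)\HC_\g(\xi_i)\big)$ with $\hc(q(v_i))$ modulo lower degree, as required by the first step.
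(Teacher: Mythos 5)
Your reduction of Theorem \ref{Kostant} to the unconditional identity $\theta\big((\ev_\rho\otimes 1)\circ\HC_\g((U^{(m)}\g\otimes\g)^\g)\big)=\hc(\J^{(2m+1)})$ is exactly right, and your bridging map $\Psi(u\otimes x)=\gamma(u)\theta(x)$ is precisely the map $\mu=\m\circ(\tau\otimes 1)$ the paper uses. But the proposal has a genuine gap at its centre: the comparison $\hc\circ\pi_\J\circ\Psi=\theta\circ(\ev_\rho\otimes 1)\circ\HC_\g$, which you yourself flag as ``the main obstacle,'' is never established; you only sketch a hoped-for ``structural comparison along the filtration by $Z(U\g)$ and by exponent.'' Since everything rests on this identity, the argument is incomplete as it stands. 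There is also a small factual slip in your first step: $\ev_\rho\circ\HC$ is \emph{not} the central character of the trivial module (that is $\ev_0\circ\HC$, e.g.\ the Casimir has $\ev_\rho(\HC(C))=3(\rho,\rho)\ne 0$); fortunately the span statement you want only needs $Z(U\g)$-linearity, not annihilation of the augmentation ideal.

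The resolution is both simpler and more structured than you anticipate, and your projection $\pi_\J$ is a red herring. First, no projection onto primitives is needed: by Kostant's Theorem D one has $\Cl(\g)=\tau(U\g)\otimes\Cl(\g)^\g$ with $\tau(Z(U\g))=\C$, and combining this with the freeness of $(U\g\otimes\g)^\g$ of rank $r$ over $Z(U\g)$ and Kostant's explicit generators of $\J^{(2m_i+1)}$, one gets $\mu\big((U^{(m)}\g\otimes\g)^\g\big)=\J^{(2m+1)}$ on the nose --- the ``decomposable corrections'' you worry about do not occur. Second, the comparison of the two Harish--Chandra projections is elementary Clifford calculus, not a deep structural matching: the computation
\[
\mu\big(\rho_R(x)(a\otimes b)\big)=-\tau(a)\tau(x)\theta(b)+\tau(a)\theta([x,b]_\g)=-\tau(a)\theta(b)\tau(x)
\]
shows $\mu\big(\rho_R(\n_+)(U\g\otimes\g)\big)\subset\Cl(\g)\theta(\n_+)$, while trivially $\mu\big(\rho_L(\n_-)(U\g\otimes\g)\big)\subset\theta(\n_-)\Cl(\g)$; both lie in $\ker\hc$, so $\hc\circ\mu=\hc\circ\mu\circ\HC_\g$ on invariants. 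On $S\h\otimes\h$ one then uses that $\hc$ is multiplicative on $\ad_\h$-invariants together with $\hc(\tau(p))=\ev_\rho(p)$ for $p\in S\h$ (your degree-one check, extended multiplicatively) to conclude $\hc\circ\mu=\theta\circ(\ev_\rho\otimes 1)\circ\HC_\g$. You should replace your third step by this two-line argument; without it, the proof does not close.
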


The rest of this section is devoted to the proof of 
Theorem \ref{Kostant}. 
To advance, we formulate several key propositions.
Let 
\[�
	\tau: U\g \to \Cl(\g)
\]
be the unique algebra homomorphism defined
by the properties $[ \tau(x) , \theta(y) ]=\theta( [x,y]_\g )$ 
and ${\rm deg}(\tau(x))=2$, for $x,y \in \g$. 
Let 
\[�
	\m : \Cl(\g) \otimes \g \to \Cl(\g), \ a \otimes b \to a\theta(b)
\] 
be the product map
in the Clifford algebra, and
\[�
	\mu = \m \circ (\tau \otimes 1): U\g \otimes \g \to \Cl(\g).
\]
Since $\tau$ has degree two, note  
that $\mu$ maps $U^{(m)}\g \otimes \g$ to $\Cl^{(2m+1)}(\g)$. 
Our first proposition is the following: 

\begin{proposition}    \label{prop1}

The subspaces $\mu\big(\rho_L(\n_-)(U\g \otimes \g)\big)$ and 
$\mu\big(\rho_R(\n_+)(U\g \otimes \g)\big)$
are contained in the kernel of the odd Harish-Chandra projection $\hc$.

\end{proposition}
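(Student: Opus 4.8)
The plan is to analyze how the map $\mu = \m \circ (\tau \otimes 1)$ interacts with the two decompositions — the one on $U\g \otimes \g$ defining $\HC_\g$ and the one on $\Cl(\g)$ defining $\hc$ — and to show that the "bad" summands get sent into the kernel of $\hc$. The essential computational input is understanding the algebra homomorphism $\tau: U\g \to \Cl(\g)$ on root vectors: since $[\tau(x),\theta(y)] = \theta([x,y]_\g)$ with $\deg \tau(x) = 2$, one checks directly that $\tau(x) = \frac{1}{2}\sum_i \theta(x_i)\theta([x, x^i]_\g)$ (in terms of dual bases, with a possible scalar shift), so $\tau$ is the standard map embedding $\g$ as an $\mathfrak{so}$-Lie subalgebra of $\Cl(\g)$. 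The key qualitative fact I would extract is: for $x \in \n_-$, the element $\tau(x)$ lies in $\theta(\n_-)\,\Cl(\g) + \Cl(\g)\,\theta(\h)\theta(\n_-) + \dots$; more cleanly, $\tau(\n_-) \subset \theta(\n_-)\Cl(\g) + \Cl(\g)\theta(\n_+)$ up to terms in $\Cl(\h)$, and similarly for $\n_+$. Actually the cleanest route avoids computing $\tau$ explicitly.

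First I would treat the summand $\mu(\rho_L(\n_-)(U\g \otimes \g))$. An element of $\rho_L(\n_-)(U\g \otimes \g)$ is a sum of terms $x a \otimes b$ with $x \in \n_-$, $a \in U\g$, $b \in \g$, so $\mu(x a \otimes b) = \tau(x)\tau(a)\theta(b) = \tau(x) \cdot \mu(a \otimes b)$. Thus it suffices to show $\tau(x) \Cl(\g) \subset \ker \hc$ for $x \in \n_-$. I would prove the sharper statement $\tau(\n_-) \subset \theta(\n_-)\Cl(\g) + \Cl(\g)\theta(\n_+)$; combined with the fact that $\theta(\n_-)\Cl(\g) + \Cl(\g)\theta(\n_+)$ is a \emph{two-sided} ideal-like object with respect to the relevant multiplication (precisely: $\theta(\n_-)\Cl(\g)$ is a left ideal hence absorbs left multiplication is the wrong direction — rather $\big(\theta(\n_-)\Cl(\g) + \Cl(\g)\theta(\n_+)\big)\Cl(\g) \subseteq \theta(\n_-)\Cl(\g) + \Cl(\g)\theta(\n_+)$ is false too), so I need to be careful: $\theta(\n_-)\Cl(\g)$ \emph{is} closed under right multiplication by $\Cl(\g)$, which is exactly what is needed since $\mu(xa\otimes b) = \tau(x)\cdot(\text{stuff})$ has $\tau(x)$ on the \emph{left}. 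So I want $\tau(x) \in \theta(\n_-)\Cl(\g)$ for $x\in\n_-$, up to an element of $\Cl(\g)\theta(\n_+)$; but then $\Cl(\g)\theta(\n_+)$ multiplied on the right by arbitrary $\Cl(\g)$ is not contained in the kernel. The correct fix: one shows $\tau(x)$ for $x \in \n_-$ can be written as $\theta(x)\cdot c$ for some $c \in \Cl(\g)$ (using the explicit formula and that $[x,x^i]_\g$ involves only lower or equal weight spaces, so reordering puts $\theta(x)$ leftmost), hence $\tau(x)\Cl(\g) \subseteq \theta(\n_-)\Cl(\g) \subseteq \ker\hc$. Dually, for the $\rho_R(\n_+)$ summand, $\mu(\rho_R(x)(a\otimes b)) = -\tau(ax)\theta(b) + \tau(a)\theta([x,b]_\g) = -\tau(a)\tau(x)\theta(b) + \tau(a)\theta([x,b]_\g) = \tau(a)\big(\theta([x,b]_\g) - \tau(x)\theta(b)\big) = -\tau(a)[\tau(x),\theta(b)]_{\text{?}} \dots$ — here I would use $[\tau(x),\theta(b)] = \theta([x,b]_\g)$ to get $\mu(\rho_R(x)(a\otimes b)) = -\tau(a)\theta(b)\tau(x)$ (the cross terms cancel), so the output is $\mu(a\otimes b)\cdot\tau(x)$ with $\tau(x)$ on the \emph{right}, $x \in \n_+$; and for $x \in \n_+$ one writes $\tau(x) = c' \cdot \theta(x)$ with $\theta(x)$ rightmost, landing in $\Cl(\g)\theta(\n_+) \subseteq \ker\hc$.

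So the concrete steps are: (1) establish the explicit formula for $\tau$ on $\g$ in a Chevalley basis and verify $\deg = 2$ and the commutation relation; (2) use the weight structure of the Chevalley basis to show that for $x = e_{-\alpha} \in \n_-$ one can move $\theta(e_{-\alpha})$ to the far left in the expression for $\tau(e_{-\alpha})$, i.e. $\tau(e_{-\alpha}) \in \theta(\n_-)\Cl(\g)$, and symmetrically $\tau(e_\alpha) \in \Cl(\g)\theta(\n_+)$ for $e_\alpha \in \n_+$ — here the point is that $[e_{-\alpha}, e_\gamma]_\g$ has weight $\gamma - \alpha$, and pairing $\theta(e_{-\alpha})$ against $\theta$ of a weight vector of weight $\ne \alpha$ gives zero in $B_\g$, so no reordering obstruction arises; (3) compute $\mu$ on the left and right summands as above, reducing to $\tau(\n_-)\Cl(\g) \subseteq \theta(\n_-)\Cl(\g)$ and $\Cl(\g)\tau(\n_+) \subseteq \Cl(\g)\theta(\n_+)$; (4) conclude via the defining decomposition of $\Cl(\g)$ that both summands lie in $\ker\hc$. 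I expect the main obstacle to be step (2): making precise that $\tau(e_{\mp\alpha})$ really does have $\theta(e_{\mp\alpha})$ as an extractable factor on the correct side, which requires care with the anti-symmetrization/normal-ordering of $\tau(x) = \tfrac12\sum_i \theta(x_i)\theta([x,x^i]_\g)$ and the observation that the only term where the partner of $\theta(e_{-\alpha})$ could fail to lie in $\n_-$ is the Cartan-valued term $[e_{-\alpha}, e_\alpha]_\g = \check\alpha \in \h$, which contributes $\theta(e_{-\alpha})\theta(\check\alpha)$ — still of the form $\theta(\n_-)\Cl(\g)$ — plus possibly a scalar from $B_\g(e_{-\alpha}, \cdot)$ that vanishes since $B_\g(e_{-\alpha}, [e_{-\alpha},\cdot]_\g)$ pairs weight $-\alpha$ with weight $\ne \alpha$. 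Once this bookkeeping is done cleanly the rest is formal.
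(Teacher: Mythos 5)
Your proposal is correct and follows essentially the same route as the paper: reduce to the inclusions $\tau(\n_-)\subset\theta(\n_-)\Cl(\g)$ and $\tau(\n_+)\subset\Cl(\g)\theta(\n_+)$ (which the paper justifies by the same weight argument you spell out in your step (2)), compute $\mu\big(\rho_R(x)(a\otimes b)\big)=-\tau(a)\theta(b)\tau(x)$ from the commutation relation $[\tau(x),\theta(b)]=\theta([x,b]_\g)$, and conclude by the one-sided ideal absorption into $\ker\hc$. Your extra care about reordering $\theta(u)\theta(v)$ and the vanishing of the scalar terms $B_\g(u,v)$ for weight reasons is exactly the content the paper leaves implicit.
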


\begin{proof}

To start with, observe that $\tau(\n_-) \subset \theta(\n_-)\Cl(\g)$ 
and $\tau(\n_+) \subset \Cl(\g) \theta(\n_+)$.  
Indeed, elements of $\tau(\n_-)$ (resp.\ $\tau(\n_+)$) 
are of negative (resp.\ positive) weight under the adjoint $\h$-action.

For the first subspace, we have
\[
	\mu \big(  \rho_L(\n_-) (U\g \otimes \g) \big)
	\subset \m \big( \tau(\n_-) \tau(U\g) \otimes \g \big)
	\subset \tau(\n_-) \Cl(\g) \subset \theta(\n_-) \Cl(\g). 
\]
For the second subspace, a more detailed analysis is needed: 
For $a \in U\g$ and $x,b \in \g$, one has, 
\begin{eqnarray*}
	\mu \big( \rho_R(x) (a \otimes b)\big)  
		& = & \m \circ (\tau \otimes 1) \big( - ax \otimes b + a \otimes [x,b]_\g  \big)\\
		& = &  - \tau(a) \tau(x) \theta(b) + \tau(a) \theta([x,b]_\g) 
		\,\,  = \,\,� - \tau(a) \theta(b) \tau(x).
\end{eqnarray*}
Hence, 
\[
	\mu\big( \rho_R(\n_+) (U\g \otimes \g) \big)
	\subset \Cl(\g) \tau(\n_+) \subset \Cl(\g) \theta(\n_+).
\] 

By definition, both $\theta(\n_-) \Cl(\g)$ and $\Cl(\g) \theta(\n_+)$ are contained
in the kernel of $\hc$ and the proposition follows.

\end{proof}

The next fact is proved for instance in Lemma 4.2 of \cite{Bazlov}. 
For convenience, we recall the proof. 

\begin{proposition}     \label{prop2}

For any $p \in S\h$, one has $\hc(\tau(p)) = \ev_\rho(p)$. 
In particular, $\hc \circ \tau$ maps $S\h \subset U\g$ to $\C$.

\end{proposition}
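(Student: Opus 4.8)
The plan is to reduce the statement to the degree-one case $p\in\h$ and then propagate it to all of $S\h$. The one point to be careful about is that $\hc$ is \emph{not} an algebra homomorphism, so the passage to products of elements of $\h$ cannot be done by mere multiplicativity; instead the argument will be organized around the observation that $\theta(\n_-)\Cl(\g)$ is a \emph{right ideal} of $\Cl(\g)$ contained in $\ker\hc$ (symmetrically one could use the left ideal $\Cl(\g)\theta(\n_+)$).

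First I would recall the explicit formula for $\tau$ on $\g$: if $\{x_i\}$ is a basis of $\g$ with $B_\g$-dual basis $\{x^i\}$, then $\tfrac14\sum_i\theta([x,x_i]_\g)\theta(x^i)$ satisfies $[\,\cdot\,,\theta(y)]=\theta([x,y]_\g)$ and has degree two, hence equals $\tau(x)$ by uniqueness. Specializing to $x=h\in\h$ and taking $\{x_i\}$ to consist of a basis of $\h$ together with the root vectors $e_\alpha$ ($\alpha\in\Delta$), the $\h$-part drops out since $[h,\h]_\g=0$, leaving
\[
  \tau(h)=\tfrac14\sum_{\alpha\in\Delta}\alpha(h)\,\theta(e_\alpha)\theta(e^\alpha),
\]
where $e^\alpha:=x^i\in\g_{-\alpha}$ is the dual of $e_\alpha$, so that $B_\g(e_\alpha,e^\alpha)=1$. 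For $\alpha\in\Delta^-$ the term $\theta(e_\alpha)\theta(e^\alpha)$ already lies in $\theta(\n_-)\Cl(\g)$ (as $e_\alpha\in\n_-$); for $\alpha\in\Delta^+$ the Clifford relation rewrites it as $2-\theta(e^\alpha)\theta(e_\alpha)$, again with $\theta(e^\alpha)\theta(e_\alpha)\in\theta(\n_-)\Cl(\g)$ (as $e^\alpha\in\g_{-\alpha}\subset\n_-$). Collecting the scalar contributions gives $\tfrac14\sum_{\alpha\in\Delta^+}\alpha(h)\cdot 2=\rho(h)$, so
\[
  \tau(h)=\rho(h)\,1+n_h,\qquad n_h\in\theta(\n_-)\Cl(\g),
\]
and applying $\hc$ (which kills $n_h$ and fixes $1$) yields $\hc(\tau(h))=\rho(h)=\ev_\rho(h)$, settling the degree-one case. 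One could also note that $\tau(h)$ is $\ad_\h$-invariant and that the zero weight space of $\Cl(\g)$ decomposes as $\Cl(\h)\oplus\big(\Cl(\g)^{\h}\cap\theta(\n_-)\Cl(\g)\big)$, which makes the membership $\tau(h)-\hc(\tau(h))\in\theta(\n_-)\Cl(\g)$ automatic; the explicit formula is then needed only to identify $\hc(\tau(h))$ with $\rho(h)$.

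To finish I would pass from $\h$ to $S\h\subset U\g$ by linearity, so that it suffices to treat a monomial $p=h_1\cdots h_k$; since $\tau$ is an algebra homomorphism, $\tau(p)=\tau(h_1)\cdots\tau(h_k)=\prod_j\big(\rho(h_j)\,1+n_{h_j}\big)$. Here the right-ideal property of $L:=\theta(\n_-)\Cl(\g)$ does the work: splitting off the leftmost factor, $\tau(h_1)\cdots\tau(h_k)=\rho(h_1)\,\tau(h_2)\cdots\tau(h_k)+n_{h_1}\,\tau(h_2)\cdots\tau(h_k)$, the second summand lies in $L\,\Cl(\g)=L$, and an induction on $k$ gives $\tau(p)\equiv\rho(h_1)\cdots\rho(h_k)\,1\pmod{L}$; applying $\hc$ and using $\hc(L)=0$, $\hc(1)=1$ yields $\hc(\tau(p))=\rho(h_1)\cdots\rho(h_k)=\ev_\rho(p)$, which also establishes the ``in particular'' assertion. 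I expect no serious obstacle beyond the organizational point already stressed — keeping the bookkeeping on the correct side (a right ideal inside $\ker\hc$) rather than attempting to commute $\hc$ past the product — the remaining steps being elementary Clifford calculus.
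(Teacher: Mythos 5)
Your proposal is correct. The degree-one computation is essentially the computation in the paper: both reduce $\tau(x)$ for $x\in\h$ to a sum over roots, use the Clifford relation to peel off the scalar $\rho(x)$, and leave a remainder in $\theta(\n_-)\Cl(\g)\subset\ker\hc$. Where you diverge is in passing from $\h$ to all of $S\h$: the paper invokes the external fact that $\hc$ is an algebra homomorphism on $\ad_\h$-invariant elements (Lemma 2.4 of Bazlov's paper) and then simply multiplies, whereas you avoid citing any multiplicativity of $\hc$ by exploiting that $L=\theta(\n_-)\Cl(\g)$ is a right ideal contained in $\ker\hc$, so that $\tau(h_1)\cdots\tau(h_k)=\prod_j(\rho(h_j)+n_{h_j})\equiv\rho(h_1)\cdots\rho(h_k)\pmod{L}$ by a one-line induction. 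Your route is more self-contained and slightly more elementary: it works precisely because each factor $\tau(h_j)$ is a scalar modulo $L$, so only the one-sided ideal property is needed rather than the full statement that $\hc$ respects products of weight-zero elements. The cost is that it is special to this situation, while the paper's cited lemma is reused elsewhere (e.g.\ in the proof of Proposition 4.7 of the paper); the side remark about the zero-weight decomposition of $\Cl(\g)$ is not needed for your argument and could be dropped.
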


\begin{proof}

For $x \in \h$, we have
\[
	\tau(x) = \frac{1}{4} \, \sum_{\alpha \in \Delta_+} 
		\, \theta(e_\alpha) \theta([e_{-\alpha} , x]_\g)  
		+ \theta(e_{-\alpha}) \theta ([e_{\alpha} , x]_\g) 
	=  \frac{1}{4} \, \sum_{\alpha \in \Delta_+} 
		\, \alpha(x) (2 -2 e_{-\alpha} e_{\alpha}).
\]
As a result, 
\[
	\hc(\tau(x))= \frac{1}{2} \, \sum_{\alpha \in \Delta_+} \, \alpha(x) = \rho(x).
\] 
Since $\hc$ is an algebra homomorpism on $\ad_\h$-invariant elements (\cite[Lemma 2.4]{Bazlov}),
it follows that $\hc(\tau(p)) = \ev_\rho(p)$ for any $p \in S\h$, as required.

\end{proof}

Define two maps $\mu_{i}: (U\g \otimes \g)^\g \to \Cl(\h)$, for $i=1,2$, as follows. 
The first map,
\[�
	\mu_1: (U\g \otimes \g)^\g \to S\h \otimes \h \to \h \to \Cl(\h), 
\]
is the composition $\theta \circ (\ev_\rho \otimes 1) \circ \HC_\g$.
The second map, 
\[
	\mu_2 : (U\g \otimes \g)^\g \to \Cl(\g)^\g \to \Cl(\h),
\]
is the composition of $\mu : U\g \otimes \g \to \Cl(\g)$ 
and of the odd Harish-Chandra projection $\hc : \Cl(\g)^\g \to \Cl(\h)$.

\begin{proposition}    \label{prop3}

The maps $\mu_1$ and $\mu_2$ are equal to each other. 

\end{proposition}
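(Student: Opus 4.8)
The plan is to decompose an arbitrary element of $(U\g \otimes \g)^\g$ according to the generalized Harish-Chandra decomposition of $U\g \otimes \g$ and then track both maps term by term. Given $\xi \in (U\g \otimes \g)^\g$, write $\xi = \xi_0 + \xi_-$ where $\xi_0 \in S\h \otimes \h$ and $\xi_- \in \rho_L(\n_-)(U\g \otimes \g) + \rho_R(\n_+)(U\g \otimes \g)$, so that $\HC_\g(\xi) = \xi_0$. First I would evaluate $\mu_1$: by definition $\mu_1(\xi) = \theta\big((\ev_\rho \otimes 1)(\xi_0)\big)$. Writing $\xi_0 = \sum_j p_j \otimes x_j$ with $p_j \in S\h$ and $x_j \in \h$, this is $\sum_j \ev_\rho(p_j)\,\theta(x_j)$.

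Next I would compute $\mu_2(\xi) = \hc(\mu(\xi)) = \hc(\mu(\xi_0)) + \hc(\mu(\xi_-))$. For the second term, Proposition~\ref{prop1} shows that $\mu(\xi_-) \in \theta(\n_-)\Cl(\g) + \Cl(\g)\theta(\n_+) \subset \ker \hc$, so $\hc(\mu(\xi_-)) = 0$. For the first term, $\mu(\xi_0) = \sum_j \tau(p_j)\,\theta(x_j)$, and since $\tau(p_j)$ is $\ad_\h$-invariant while $\theta(x_j) \in \Cl(\h)$, the homomorphism property of $\hc$ on $\ad_\h$-invariant elements (\cite[Lemma 2.4]{Bazlov}) together with Proposition~\ref{prop2} gives $\hc(\tau(p_j)\theta(x_j)) = \hc(\tau(p_j))\,\theta(x_j) = \ev_\rho(p_j)\,\theta(x_j)$. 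Summing over $j$ yields exactly $\mu_1(\xi)$, so $\mu_1 = \mu_2$.

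The step I expect to be the main obstacle — or at least the one requiring genuine care rather than bookkeeping — is justifying that $\hc$ behaves multiplicatively on the relevant products $\tau(p_j)\theta(x_j)$. The cited Lemma~2.4 of \cite{Bazlov} gives the homomorphism property of $\hc$ on $\ad_\h$-invariant elements; one must check that $\tau(p_j)$ is indeed $\ad_\h$-invariant (clear, since $p_j \in S\h$ and $\tau$ intertwines the adjoint actions) and that $\theta(x_j) \in \Cl(\h)$ lies in the subalgebra on which $\hc$ restricts to the identity. A secondary point worth spelling out is that the decomposition $\xi = \xi_0 + \xi_-$ respects the $\g$-invariance in the sense needed — but here we do not actually need $\xi_0$ or $\xi_-$ to be separately invariant: the computation of $\mu_2$ only uses that $\hc \circ \mu$ kills $\xi_-$ and agrees with $\theta \circ (\ev_\rho \otimes 1)$ on $\xi_0$, both of which hold for arbitrary (not necessarily invariant) elements of the respective summands. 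Thus the argument is essentially a direct comparison, with the content concentrated in Propositions~\ref{prop1} and~\ref{prop2} and the multiplicativity of $\hc$.
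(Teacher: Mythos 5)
Your proof is correct and follows essentially the same route as the paper's: reduce to $\HC_\g(\xi)$ using Proposition \ref{prop1}, then apply the multiplicativity of $\hc$ on $\ad_\h$-invariant elements together with Proposition \ref{prop2} to the terms $\tau(p_j)\theta(x_j)$. The only difference is presentational (you make the decomposition $\xi=\xi_0+\xi_-$ and the non-invariance caveat explicit), which is a fair point of care but not a new idea.
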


\begin{proof}

By Proposition \ref{prop1},
$\mu_2(\alpha)=\mu_2 (\HC_\g(\alpha))$ for $\alpha \in (U\g \otimes \g)^\g$
(here we view $S\h \otimes \h$ as a subspace of $U\g \otimes \g$).
Furthermore, writing $\HC_\g(\alpha) = \sum_k a_k \otimes x_k$
with $a_k \in S\h, x_k \in \h$, one has 
\[
	\mu_2(\alpha)
	 	=  \sum_k \mu_2( a_k \otimes x_k) \\ 
	 	=  \sum_k \hc \circ \m \circ (\tau \otimes 1) (a_k \otimes x_k) \\
	 	=  \sum_k \hc(\tau(a_k) \theta(x_k)).  
\]
Recall that $\hc$ is an algebra homomorphism on $\ad_\h$-invariant
elements. 
Hence, by Proposition \ref{prop2}, we get: 
\[�
	\mu_2(\alpha)
		=  \sum_k \hc(\tau(a_k)) \hc(\theta(x_k)) \\
		=  \sum_k \ev_\rho(a_k) \theta(x_k) \\
		=  (\ev_\rho \otimes \theta) (\HC_\g (\alpha)) \\
		= \mu_1(\alpha).
\]

\end{proof}

Our last proposition is essentially due to Kostant \cite{Kostant_Cliff}. 

\begin{proposition}   \label{prop4}

For any $m \in \N$, we have 
$\mu\big( (U^{(m)}\g \otimes \g)^\g \big) = \J^{(2m+1)}.$

\end{proposition}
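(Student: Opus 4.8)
The plan is to compare the two graded objects $\mu\big((U\g\otimes\g)^\g\big)$ and $\J$ inside $\Cl(\g)$, and to check that the filtration on $U\g\otimes\g$ by $U^{(m)}\g\otimes\g$ is carried to the degree filtration on $\J$. I would first establish the inclusion $\mu\big((U^{(m)}\g\otimes\g)^\g\big)\subseteq \J^{(2m+1)}$. Since $\tau$ has degree two and $\m$ adds one to the Clifford degree, we already know $\mu\big(U^{(m)}\g\otimes\g\big)\subseteq \Cl^{(2m+1)}(\g)$; the substance is that on the $\g$-invariant part the image actually lands in $q(P)=\J$, not merely in $\Cl(\g)^\g$. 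For this I would invoke Kostant's Theorem~B (quoted in the Remark after Conjecture~\ref{Kostant_conj}): $\Cl(\g)^\g\cong \Cl(P,B_P)$ with $P$ the primitive space. Under this identification $\J$ is exactly the subspace of \emph{primitive} elements in $\Cl(\g)^\g$, i.e.\ those anti-invariant under the principal automorphism $\alpha$ up to sign, or equivalently the linear span of the generators. The key point is that $\mu$ restricted to invariants factors through the map $U\g\to\Cl(\g)$, $a\mapsto\tau(a)$ followed by right-multiplication by a generator of $\g$, and Kostant shows (this is the content of the structure theory of $\Cl(\g)$ in \cite[\S 4]{Kostant_Cliff}, and is the ``odd'' counterpart of the fact that $(U\g\otimes\g)^\g$ maps onto the module of invariant vector fields) that such elements are primitive. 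Concretely, one checks that $q^{-1}\mu(\alpha)$ for $\alpha\in(U\g\otimes\g)^\g$ has no component in $P\wedge P$; this can be done by a weight/degree argument together with the fact that $\tau(U^{(m)}\g)$ has Clifford-degree $\le 2m$ and the extra generator contributes degree $1$, so the total degree $2m+1$ is odd and cannot be written as a nontrivial product of two elements of odd degree landing in $P$ unless one factor is a generator --- precisely the primitive case.

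For the reverse inclusion $\J^{(2m+1)}\subseteq \mu\big((U^{(m)}\g\otimes\g)^\g\big)$, I would argue by dimension count degree by degree. The graded pieces $P_{2m_i+1}$ are one-dimensional (two-dimensional in the exceptional $D_n$ case), so it suffices to exhibit, for each exponent $m_i$, one invariant $\alpha\in (U^{(m_i)}\g\otimes\g)^\g$ with $\mu(\alpha)$ having nonzero component in $q(P_{2m_i+1})$. Here I would use that $\mu$ is essentially surjective onto $\Cl(\g)^\g$ in the appropriate sense: the map $\tau\otimes 1$ composed with Clifford multiplication realizes the Clifford module structure, and $(U\g\otimes\g)^\g$ surjects onto a space large enough to hit all primitive generators once we mod out by the kernel described in Proposition~\ref{prop1}. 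Alternatively — and this is cleaner — combine Proposition~\ref{prop3} with the Bazlov--Kostant Theorem~\ref{BK}: $\mu_2=\hc\circ\mu$ on invariants equals $\mu_1=\theta\circ(\ev_\rho\otimes 1)\circ\HC_\g$, and $\hc\circ q:P\to\theta(\h)$ is an isomorphism, so the composite $\hc\circ\mu:(U\g\otimes\g)^\g\to\theta(\h)$ is surjective; pulling back a preimage of each $\theta(x_i)$ with $x_i$ a generator of $\F_{m_i}\h$ and checking the degree bound gives the generators of $\J$ in the right filtration degree. One still has to verify that the preimages can be chosen in $U^{(m_i)}\g\otimes\g$, which follows because $\HC_\g$ is filtration-preserving (it is defined by projecting off a $\g_{\pm}$-part, which does not raise degree) and $\ev_\rho$ is degree-nonincreasing.

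Assembling these: the inclusion $\mu\big((U^{(m)}\g\otimes\g)^\g\big)\subseteq\J^{(2m+1)}$ from the first paragraph, together with the surjectivity-plus-degree-control from the second, and the fact that both sides jump dimension exactly at $m=m_i$, forces equality in every filtration degree. The main obstacle I anticipate is the first inclusion — showing the image lands in the \emph{primitive} subspace $\J=q(P)$ rather than in all of $\Cl(\g)^\g$; this is where one genuinely needs Kostant's structure theorem for $\Cl(\g)^\g$ and a careful bookkeeping of how the degree-two map $\tau$ interacts with the primitive decomposition. The reverse inclusion, by contrast, should reduce to the dimension count and the already-established Proposition~\ref{prop3} plus Theorem~\ref{BK}, and the filtration-compatibility of $\HC_\g$ is routine.
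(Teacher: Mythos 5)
There is a genuine gap in both halves of your argument. For the inclusion $\mu\big((U^{(m)}\g\otimes\g)^\g\big)\subseteq\J^{(2m+1)}$, the parity argument does not work: in $\Cl(\g)^\g\cong\Cl(P,B_P)$ a product of an odd number ($\ge 3$) of primitive generators again has odd Clifford degree, so odd degree does not force an invariant element to be primitive. The paper obtains this inclusion by a different mechanism: by Kostant's Theorem~D one has $\Cl(\g)=\tau(U\g)\otimes\Cl(\g)^\g$, the space $(U\g\otimes\g)^\g$ is a free $Z(U\g)$-module of rank $r$, and $\tau\big(Z(U\g)\big)=\C$; hence $\mu\big((U\g\otimes\g)^\g\big)$ is at most $r$-dimensional, and since it contains the $r$-dimensional space $\J$ it must equal $\J$. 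The degree count $\deg\tau(x)=2$, $\deg\theta(x)=1$ then gives $\mu\big((U^{(m)}\g\otimes\g)^\g\big)\subseteq\J\cap\Cl^{(2m+1)}(\g)=\J^{(2m+1)}$. (A correct alternative that you almost state: $\mu(\alpha)\in\Cl(\g)^\g$, $\hc(\mu(\alpha))=\mu_1(\alpha)\in\theta(\h)$ by Proposition \ref{prop3}, and $\hc$ is injective on $\Cl(\g)^\g$ with $\hc(\J)=\theta(\h)$, whence $\mu(\alpha)\in\J$; but you deploy these ingredients on the other half of the proof, where they do not help.)

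For the reverse inclusion $\J^{(2m+1)}\subseteq\mu\big((U^{(m)}\g\otimes\g)^\g\big)$ your argument is circular. The surjectivity of $\hc\circ\mu$ onto $\theta(\h)$ is not available at this stage: since $\hc$ is injective on $\Cl(\g)^\g$ and $\hc(\J)=\theta(\h)$, that surjectivity is \emph{equivalent} to $\J\subseteq\mu\big((U\g\otimes\g)^\g\big)$, i.e.\ to the unfiltered version of what you are trying to prove; and deriving it from $\mu_1$ instead would require Joseph's theorem, defeating the purpose of Theorem \ref{Kostant}. Moreover, the claim that preimages can be chosen in $U^{(m_i)}\g\otimes\g$ does not follow from $\HC_\g$ being filtration-preserving --- that property bounds the degree of \emph{images}, not of preimages. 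The real content here is Kostant's explicit formula (Corollary 80 and Theorem 89 of \cite{Kostant_Cliff}): every $p\in\J^{(2m_i+1)}$ can be written as $p=\sum_k\tau(a_k)\theta(b_k)$ with $\sum_k a_k\otimes b_k\in(U^{(m_i)}\g\otimes\g)^\g$. Without this input, or some substitute for it, your proposal does not close.
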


\begin{proof}

By Theorem D of \cite{Kostant_Cliff}, $\Cl(\g)$ is a free module over 
the subalgebra $\Cl(\g)^\g$. 
Namely, 
\[ 
	\Cl(\g) = \tau(U \g) \otimes \Cl(\g)^\g.
\]
Furthermore, $\tau$ maps the center $Z(U\g)$ 
of $U\g$ to $\C$ (see also \cite[Corollary 36]{Kostant_Cliff}). 
Corollary 80 and Theorem 89 of \cite{Kostant_Cliff} 
give an explicit expression for 
the elements in $\J^{(2 m_i +1)}$, $i=1,\ldots,r$: 
For any $p \in \J^{(2 m_i +1)}$, 
we have 
\[
	p �= \sum_{k} \tau (a_{k}) \theta(b_{k}), 
\] 
where $\sum_{k} a_{k} \otimes b_{k}$ is in $(U^{(m_i)}\g \otimes \g)^\g$. 

As a consequence, we obtain inclusions, 
$\J \subset  \mu\big( (U\g \otimes \g)^\g \big)$ 
and $\J^{(2 m_i +1)} \subset  \mu\big( (U^{(m_i)}\g \otimes \g)^\g \big)$. 
The vector space $\J$ has dimension $r$  
and $(U\g \otimes \g)^\g$ is a free module 
of rank $r$ over $Z(U\g)$.  
Then we deduce from the equality $\tau \big( Z(U\g) \big) = \C$
that $\mu \big( (U\g \otimes \g)^\g \big) = \J$. 
Moreover, since ${\rm deg}(\tau(x))=2$ 
and ${\rm deg}(\theta(x))=1$ for $x\in \g$, 
we get  
\[ 
	\mu \big( (U^{(m)}\g \otimes \g)^\g \big) \subset \J^{(2m+1)}.
\]
Hence, $\mu \big( (U^{(m)}\g \otimes \g)^\g \big) = \J^{(2m+1)}$.

\end{proof}

We are now in the position to prove Theorem \ref{Kostant}:

\begin{proof}[Proof of Theorem \ref{Kostant}]

Joseph's theorem (Theorem \ref{Joseph}) states that 
\[�
	\mu_1\big( (U^{(m)}\g \otimes \g)^\g \big) = \theta(\c{\F}^{(m)}\h).
\]
We have to show that the above equality 
implies the Kostant conjecture (Conjecture \ref{Kostant}). 
In other words, 
we have to show that 
$\mu_1\big( (U^{(m)}\g \otimes \g)^\g \big)=  \hc( \J^{(2m+1)})$. 
But by Proposition \ref{prop3} and Proposition \ref{prop4}, 
we have 
\[
	\mu_1\big( (U^{(m)}\g \otimes \g)^\g \big) 
	= \mu_2 \big( (U^{(m)}\g \otimes \g)^\g \big) 
	=  \hc( \J^{(2m+1)}).
\]

\end{proof}

\section{More on Harish-Chandra projections}

The standard Harish-Chandra projection $\HC: U\g \to S\h$
is equivariant under the adjoint action of $\h$. Hence, the image
of $(U\g \otimes \g)^\g$ under $\HC \otimes 1$ lands in 
$S\h \otimes \h$.  
Our second result is the following equality of filtrations on $\h$:

\begin{theorem} \label{thm2}

\[�
	(\ev_\rho \circ \HC \otimes 1) (U^{(m)}\g \otimes \g) = \hc(\J^{(2m+1)}).
\]

\end{theorem}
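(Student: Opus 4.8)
The plan is to imitate the argument of \S 4, replacing the generalized Harish-Chandra projection $\HC_\g$ by the standard one $\HC$. Introduce the third map
\[
  \mu_3 : (U\g\otimes\g)^\g\to\Cl(\h),\qquad \mu_3 = \theta\circ(\ev_\rho\circ\HC\otimes 1).
\]
First I would upgrade Proposition~\ref{prop2} to the identity $\hc\circ\tau = \ev_\rho\circ\HC$ on all of $U\g$: for $c\in U\g$ write $c = \HC(c) + c'$ with $c'\in\n_-\,U\g + U\g\,\n_+$; the argument of Proposition~\ref{prop1} shows $\tau(c')\in\theta(\n_-)\Cl(\g) + \Cl(\g)\theta(\n_+)\subset\ker\hc$, so $\hc(\tau(c)) = \hc(\tau(\HC(c))) = \ev_\rho(\HC(c))$ by Proposition~\ref{prop2}. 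Repeating the computation of Proposition~\ref{prop3} verbatim, with $\HC_\g$ replaced by $\HC\otimes 1$ and using $\ad_\h$-equivariance to see that $(\HC\otimes 1)(\alpha)\in S\h\otimes\h$ for $\alpha$ invariant, gives $\mu_3 = \hc\circ\mu\circ(\HC\otimes 1)$ on $(U\g\otimes\g)^\g$, where $(\HC\otimes 1)(\alpha)$ is viewed inside $U\g\otimes\g$. Subtracting from $\mu_1 = \hc\circ\mu$ (Proposition~\ref{prop3}), and using that $\mu\big(\n_-\,U\g\otimes\g\big)\subset\theta(\n_-)\Cl(\g)\subset\ker\hc$, $\mu\big(U\g\otimes\n_+\big)\subset\Cl(\g)\theta(\n_+)\subset\ker\hc$ (and $\theta(\n_+)\theta(\h) = -\theta(\h)\theta(\n_+)$), one is left with
\[
  \mu_1(\alpha) - \mu_3(\alpha) = \hc\big(\mu(\beta_-(\alpha))\big),\qquad \beta_-(\alpha) := (1\otimes\pr_{\n_-})(\alpha).
\]
Since $\mu\big((U^{(m)}\g\otimes\g)^\g\big) = \J^{(2m+1)}$ (Proposition~\ref{prop4}) we already know $\mu_1\big((U^{(m)}\g\otimes\g)^\g\big) = \hc(\J^{(2m+1)})$; thus Theorem~\ref{thm2} reduces to showing that the ``defect'' $\alpha\mapsto\hc(\mu(\beta_-(\alpha)))$ carries $(U^{(m)}\g\otimes\g)^\g$ into $\hc(\J^{(2m+1)})$ for every $m$, together with the opposite inclusion.

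The hard part will be exactly this inclusion $\mu_3\big((U^{(m)}\g\otimes\g)^\g\big)\subseteq\hc(\J^{(2m+1)})$: one should not expect $\mu_3 = \mu_2$ (already for $\g = \sl_2$ one computes $\mu_3\ne\mu_2$), and $\mu(\beta_-(\alpha))$ is \emph{not} $\ad_\g$-invariant, so it does not lie in $\J$. To get around this I would exploit the $\ad_\g$-invariance of $\alpha$. Writing $\alpha = \sum_a c_a\otimes e_a$ in a Chevalley basis, invariance under the raising operators forces $[e_\gamma, c_{-\delta}]_\g = -\,N_{\gamma,-\delta-\gamma}\,c_{-(\delta+\gamma)}$ on the coefficients of the negative root vectors; combined with Kostant's explicit description (Corollary~80 and Theorem~89 of \cite{Kostant_Cliff}, already used in Proposition~\ref{prop4}) of the elements of $\J^{(2m_i+1)}$ as $\sum_k\tau(a_k)\theta(b_k)$ with $\sum_k a_k\otimes b_k\in(U^{(m_i)}\g\otimes\g)^\g$, and some Clifford calculus, this should yield that $\mu(\beta_-(\alpha))$ differs from an element of $\J^{(2m+1)}$ by an element of $\ker\hc$, hence $\hc(\mu(\beta_-(\alpha)))\in\hc(\J^{(2m+1)})$. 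This is the step I expect to require the most work.

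For the opposite inclusion it is enough to check that $\mu_3$ has maximal rank $r$ on $(U\g\otimes\g)^\g$. Both $\mu_1$ and $\mu_3$ kill $Z(U\g)_+\cdot(U\g\otimes\g)^\g$ (on $Z(U\g)$ both $\ev_\rho\circ\HC$ and $\hc\circ\tau$ restrict to the central character of the trivial module), so they factor through the $r$-dimensional space $\overline M = (U\g\otimes\g)^\g\otimes_{Z(U\g)}\C$, and by \S 4 the induced $\overline\mu_1$ is a filtered isomorphism onto $\theta(\h)$ taking the image $\overline M^{(m)}$ of $(U^{(m)}\g\otimes\g)^\g$ onto $\hc(\J^{(2m+1)})$. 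Passing to associated graded for the degree filtration, the leading term of $\ev_\rho\circ\HC\otimes 1$ is $\sum_k p_k\otimes x_k\mapsto\sum_k (p_k|_\h)(\rho)\,\pr_\h(x_k)$ on $(S\g\otimes\g)^\g$; it sends the basic generators (the differentials $dP_i$) to the gradients $\mathrm{grad}(P_i|_\h)(\rho)$, which are linearly independent because $\rho$ is a regular element of $\h^*$. Hence $\overline\mu_3$ is injective, so an isomorphism; combined with the inclusion of the previous paragraph, a dimension count gives $\overline\mu_3(\overline M^{(m)}) = \hc(\J^{(2m+1)})$, which is Theorem~\ref{thm2}.
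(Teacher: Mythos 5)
Your preliminaries are sound and in fact coincide with the paper's: the identity $\hc\circ\tau=\ev_\rho\circ\HC$ on all of $U\g$ is exactly Proposition~\ref{prop:HC} combined with Proposition~\ref{prop2}, and your ``defect formula'' $\mu_1(\alpha)-\mu_3(\alpha)=\hc\bigl(\mu((1\otimes\pr_{\n_-})(\alpha))\bigr)$ is correct. But the proof stops where it matters. The inclusion $\mu_3\bigl((U^{(m)}\g\otimes\g)^\g\bigr)\subseteq\hc(\J^{(2m+1)})$, which you yourself flag as the step requiring the most work, is only a plan (``some Clifford calculus \dots should yield''); nothing in the sketch explains why $\hc\bigl(\mu(\beta_-(\alpha))\bigr)$ should land back in $\hc(\J^{(2m+1)})$ --- a priori $\mu(\beta_-(\alpha))$ is not $\ad_\g$-invariant and its odd Harish-Chandra projection could be any element of $\theta(\h)$. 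The reverse inclusion is also not secured: the injectivity of $\overline{\mu}_3$ via ``leading terms'' is not rigorous, because $\ev_\rho$ mixes all degrees of $\HC(c)$, so linear independence of the gradients of the basic invariants at the regular point $\rho$ does not by itself give linear independence of the images of the module generators (and when two exponents coincide, as for $D_{2n}$, even a triangularity argument needs care). What you would be proving there is essentially Rohr's theorem, a nontrivial result the paper deliberately does not rely on.

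The paper's proof avoids both difficulties with a single device absent from your proposal: rather than comparing the maps $\mu_2$ and $\mu_3$ directly, it compares the \emph{orthogonal complements} of their images by pairing against $\theta(x)$ for $x\in\h$ via the super-bracket. For $v\in\J^{(2m+1)}$ one has $[\theta(x),\hc(v)]=\hc([\theta(x),v])$, a scalar determined by $B_\g(x,y)$ where $\hc(v)=\theta(y)$; and by Theorem F and Corollary 80 of \cite{Kostant_Cliff}, the space $[\theta(x),\J^{(2m+1)}]$ is exactly $(\tau\otimes\beta_x)\bigl((U^{(m)}\g\otimes\g)^\g\bigr)$ with $\beta_x=B_\g(x,\cdot)$. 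The crucial point is that contracting the second tensor factor with $\beta_x$ removes the Clifford factor $\theta(e_a)$ entirely, so only the scalar-valued identity $\hc\circ\tau=\ev_\rho\circ\HC$ on $U\g$ is needed and no control of the defect term is required. This yields that $x\perp\hc(\J^{(2m+1)})$ if and only if $x\perp(\ev_\rho\circ\HC\otimes 1)(U^{(m)}\g\otimes\g)^\g$, whence the theorem. You would need to supply either this argument or genuine proofs of your two missing inclusions.
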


We first state an auxiliary statement preparing the proof of this Theorem: 

\begin{proposition}  \label{prop:HC}

The image of the map $\HC -1: U\g \to U\g$ is contained in the kernel
of $\hc \circ \tau: U\g \to \Cl(\h)$.

\end{proposition}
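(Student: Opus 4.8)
\textbf{Proof proposal for Proposition \ref{prop:HC}.}
The plan is to reduce the statement to the decomposition of $U\g$ underlying the Harish-Chandra projection. Recall that
\[
	U\g = S\h \oplus \big( \n_- \, U\g + U\g \, \n_+ \big),
\]
and that $\HC: U\g \to S\h$ is the projection onto the first summand along the second. Hence, for any $a \in U\g$, the element $(\HC - 1)(a) = \HC(a) - a$ lies in the complement $\n_- \, U\g + U\g \, \n_+$. So it suffices to show that both $\n_- \, U\g$ and $U\g \, \n_+$ are contained in the kernel of $\hc \circ \tau$.

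First I would use the fact, already established in the proof of Proposition \ref{prop1}, that $\tau(\n_-) \subset \theta(\n_-) \Cl(\g)$ and $\tau(\n_+) \subset \Cl(\g) \theta(\n_+)$. Since $\tau$ is an algebra homomorphism, for $x \in \n_-$ and $a \in U\g$ we get $\tau(xa) = \tau(x) \tau(a) \in \theta(\n_-) \Cl(\g) \cdot \Cl(\g) \subset \theta(\n_-) \Cl(\g)$, and similarly $\tau(a x) = \tau(a) \tau(x) \in \Cl(\g) \theta(\n_+)$ for $x \in \n_+$. By the very definition of the decomposition $\Cl(\g) = \Cl(\h) \oplus \big( \theta(\n_-) \Cl(\g) + \Cl(\g) \theta(\n_+) \big)$, both $\theta(\n_-) \Cl(\g)$ and $\Cl(\g) \theta(\n_+)$ lie in the kernel of $\hc$. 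Therefore $\hc(\tau(\n_- \, U\g)) = 0$ and $\hc(\tau(U\g \, \n_+)) = 0$, so $\hc \circ \tau$ annihilates $\n_- \, U\g + U\g \, \n_+$, which is exactly the image of $\HC - 1$.

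This argument is essentially immediate given the groundwork already laid for Proposition \ref{prop1}; there is no real obstacle, only the bookkeeping of keeping track of which side the nilpotent factors sit on. The one point that requires a word of care is that $\tau$ genuinely is an algebra homomorphism (so that $\tau$ of a product is the product of the $\tau$'s), which is part of the definition of $\tau$ given before Proposition \ref{prop1}, and that the two one-sided ideals $\theta(\n_-)\Cl(\g)$ and $\Cl(\g)\theta(\n_+)$ are stable under left, respectively right, multiplication by all of $\Cl(\g)$ — both of which are clear. I expect this proposition to then feed directly into the proof of Theorem \ref{thm2} in the same way Proposition \ref{prop1} fed into Theorem \ref{Kostant}: composing with $\hc \circ \tau$ kills the difference between working with $U\g \otimes \g$ and with its Harish-Chandra image $S\h \otimes \h$, after which Proposition \ref{prop2} and Proposition \ref{prop4} identify the result with $\hc(\J^{(2m+1)})$.
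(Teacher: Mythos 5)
Your proof is correct and is essentially identical to the paper's own argument: both reduce to the containment of the image of $\HC - 1$ in $\n_-\, U\g + U\g\, \n_+$ and then use $\tau(\n_-) \subset \theta(\n_-)\Cl(\g)$ and $\tau(\n_+) \subset \Cl(\g)\theta(\n_+)$ together with the multiplicativity of $\tau$ to land in $\ker(\hc)$. No gaps.
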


\begin{proof} 

The image of the map $\HC -1$ is the space
$\n_- \, U\g + U\g \,  \n_+ \subset U\g$. 
For the first term, we have
\[ 
	\tau\big( \n_- \, U\g \big )= \tau(\n_-)\tau(U\g) \subset \theta(\n_-)\Cl(\g) \subset
	{\rm ker}(\hc).
\]
Indeed, as already observed, we have $\tau(\n_-) \subset \theta(\n_-)\Cl(\g)$ 
(see the proof of Proposition \ref{prop1}).  
Similarly, for the second term,
\[
	\tau\big( U\g \, \n_+ \big)=\tau(U\g) \tau(\n_+) \subset \Cl(\g)  \theta(\n_+) \subset
	{\rm ker}(\hc).
\]

\end{proof}

Now we are ready to present the proof of Theorem \ref{thm2}.

\begin{proof}[Proof of Theorem \ref{thm2}]

By Proposition \ref{prop:HC}, the following maps from $U\g$ to $\Cl(\h)$ coincide: 
\[
	\hc \circ \tau = \hc \circ \tau \circ \HC. 
\]
In turn, by Proposition \ref{prop2}, 
$\hc \circ \tau= \ev_\rho$ on the image of the Harish-Chandra
map $\HC: U\g \to S\h$. 
Hence, we obtain an equality of maps: 
\[
	\hc \circ \tau= \ev_\rho \circ \HC.
\]
This observation shows that the image of both maps is in fact equal to $\C \subset \Cl(\h)$.

For 
$x \in \g$ define a linear form, $\beta_x: \g \to \C$, 
given by $\beta_x(y) = B_\g(x,y)$.
Then, for all $x \in \h$ and $\alpha \in (U\g \otimes \g)^\g$, we have
\[
	(\hc \circ \tau \otimes \beta_x)(\alpha)=(\ev_\rho \circ \HC \otimes \beta_x)(\alpha).
\]
For $x \in \h, v \in \J$, consider the expression
\[
	[\theta(x), \hc(v)] = \hc([\theta(x), v]).
\]
By Theorem \ref{BK}, $\hc(v) \in \theta(\h)$ and the above expression is 
equal to $B_\g (x , y)$, where $\hc(v) = \theta(y)$.  
Assume that it vanishes for all $v \in \J^{(2m+1)}$. 
By definition, it is equivalent to $x \in \hc(\J^{(2m+1)})^\perp$.

By Theorem F (equality (n)) and Corollary 80 (equality (302)) in \cite{Kostant_Cliff}, 
for  any $v \in \J^{(2m+1)}$ there is an element $\alpha \in (U^{(m)}\g \otimes \g)^\g$
such that $[\theta(x), v]=(\tau \otimes \beta_x)(\alpha)$. 
Furthermore, the space $(U^{(m)} \otimes \g)^\g$ 
surjects on $[\theta(x), \J^{(2m+1)}]$ under the map 
$\tau \otimes \beta_x : U\g \otimes \g \to \Cl(\g)$ for $m \in \N$.  
Then,
\[
	\hc([\theta(x), v])=(\hc \circ \tau \otimes \beta_x)(\alpha)=
	(\ev_\rho \circ \HC \otimes \beta_x)(\alpha)=
	B(x, (\ev_\rho \circ \HC \otimes 1)(\alpha) ).
\]
Vanishing of this expression for any $\alpha \in (U^{(m)}\g \otimes \g)^\g$ 
is equivalent to 
$x \in \big((\ev_\rho \circ \HC \otimes 1)(U^{(m)}\g \otimes \g)^\g\big)^\perp$.

\medskip

We have shown that the orthogonal complements of the two filtrations coincide.
Hence, so do the filtrations  in question.

\end{proof}

\begin{remark} 
While this paper was in preparation, Joseph informed us that he proved  \cite{Joseph2}
(using Zhelobenko invariants) the following equality of filtrations: 
\[ (\ev_\rho \circ \HC \otimes 1) (U^{(m)}\g \otimes \g) = \c{\F}^{(m)} \h.  
\]
Together with Theorem \ref{thm2}, this gives an alternative proof of the
Kostant Conjecture.

Joseph's result resembles of an earlier result by Rohr  \cite{Rohr} 
who showed that the filtrations $\c{\F}^{(m)} \h$ coincides with
the filtration $(\ev_\rho \circ \Phi \otimes 1)((S^{(m)}\g \otimes \g)^\g)$,   
where $\Phi$ is the projection map onto $S\h$ with respect to the decomposition, 
\[ 
	S\g = S\h \oplus S\g \big( \n_-  \oplus \n_+ \big).
\] 
In fact, Rohr's result is an essential step in the proof of Joseph's theorem.

\end{remark}


\begin{thebibliography}{AD04}

\bibitem[AM00]{AM_Weil}
A. Alekseev and E. Meinrenken,
{\it The non-commutative Weil algebra},
Invent. math. {\bf 139} (2000), no. 1, 135-172.

\bibitem[AMW00]{AMW}
A. Alekseev, E. Meinrenken and C. Woodward,
{\it Group-valued equivariant localization},
Invent. math. {\bf 140} (2000), no. 2, 327-350.

\bibitem[AM11]{AM}
A. Alekseev and P. Mnev,
{\it One-dimensional Chern-Simons theory},
Commun. Math. Phys. {\bf 307} (2011), 185-227.

\bibitem[B]{Bazlov_thesis}
Y. Bazlov, 
{\it Exterior powers of the adjoint representation of a simple Lie algebra},
PhD Thesis, Weizmann Institute, 2003.

\bibitem[B2]{Bazlov}
Y. Bazlov,
{\it The Harish-Chandra isomorphism for Clifford algebras},
preprint arXiv:0812.2059.

\bibitem[J]{Joseph}
A. Joseph,
{\it Zhelobenko invariants, Bernstein-Gelfand-Gelfand operators
and the analogue Kostant Clifford algebra conjecture},
preprint arXiv:1109.5854.

\bibitem[J2]{Joseph2}
A. Joseph,
{\it Analogue Zhelobenko invariants for the Kostant and Hitchin
Clifford algebra conjectures}, 
in preparation.

\bibitem[KNV11]{KNV}
S. Khoroshkin, M. Nazarov and E. Vinberg,
{\it A generalized Harish-Chandra isomorphism},
Adv. Math. {\bf 226} (2011), 1168-1180.

\bibitem[KO08]{KO}  
S.Khoroshkin and O.Ogievetsky, 
{\it Mickelsson algebras and Zhelobenko operators}, 
J. Algebra {\bf 319} (2008), 2113-2165.

\bibitem[K59]{Kostant_sl2}
B. Kostant,
{\it The principal three-dimensional subgroup and the Betti numbers
of a compex simple Lie group}, 
Amer. J. Math. {\bf 81} (1959), 973-1032.

\bibitem[K97]{Kostant_Cliff}
B. Kostant, 
{\it Clifford Algebra Analogue of the Hopf-Koszul-Samelson Theorem,
the $\rho$-Decomposition on $C(\g)={\rm End} \, V_\rho \otimes C(P)$,
and the $\g$-module Structure on $\wedge \g$},
Adv. Math. {\bf 125} (1997), 275-350.

\bibitem[K03]{Kostant_Dirac}
B. Kostant,
{\it Dirac Cohomology for the Cubic Dirac Operator},
Studies in memory of Issai Schur (Chevaleret/Rehovot, 2000), 
69-93, 
Progr. Math., {\bf 210}, Birkh\"{a}user Boston, 2003. 



\bibitem[R10]{Rohr} R. Rohr, 
{\it Principal basis in Cartan subalgebra}, 
J. Lie Theory {\bf 20} (2010), no. 4, 673-687.


\end{thebibliography}
\end{document}